\documentclass[11pt,a4paper]{article}
\usepackage{amssymb,amsmath,mathrsfs,enumerate}
\numberwithin{equation}{section}

\usepackage[colorlinks=true, pdfstartview=FitV, linkcolor=blue, citecolor=blue, urlcolor=blue,pagebackref=false]{hyperref}

\usepackage{tikz}
\usepackage{float}
\usepackage[font=footnotesize]{caption}

\usepackage{color} 
\parskip 2pt

\setlength{\topmargin}{-0.50cm}
\setlength{\oddsidemargin}{1.05cm}
\textwidth=140mm
\textheight=230mm

\usepackage{times,theorem,latexsym,color,comment}

\newcommand{\BOX}{\ensuremath\Box}

\newtheorem{theorem}{Theorem }[section]

{\theorembodyfont{\rmfamily}}
{\theorembodyfont{\rmfamily}}
{\theorembodyfont{\rmfamily}}
\newtheorem{lemma}[theorem]{Lemma}
\newtheorem{proposition}[theorem]{Proposition}
{\theorembodyfont{\rmfamily}\newtheorem{remark}[theorem]{Remark}}
{\theorembodyfont{\rmfamily}}

\newcommand{\Z}{\mathbb{Z}}
\newcommand{\R}{\mathbb{R}}

\newcommand{\dd}{\,{\rm d}}

\def\XXint#1#2#3{{\setbox0=\hbox{$#1{#2#3}{\int}$}
		\vcenter{\hbox{$#2#3$}}\kern-.5\wd0}}

\DeclareMathOperator*{\esssup}{ess\,sup}

\newenvironment{proof}{{\vskip\baselineskip\noindent\textbf{Proof:}}}%
{\hspace*{.1pt}\hspace*{\fill}\BOX\vskip\baselineskip}

\newenvironment{proofx}[1]%
{\vskip\baselineskip\noindent\textbf{Proof of {#1}:}}%
{\hspace*{.1pt}\hspace*{\fill}\BOX\vskip\baselineskip}
{\vskip\baselineskip\noindent\textbf{Proof of Theorem \protect\ref{#1}:}}%
{\hspace*{.1pt}\hspace*{\fill}\BOX\vskip\baselineskip}
{\vskip\baselineskip\noindent\textbf{Proof of Theorems \protect\ref{#1} --
		\protect\ref{#2}:}}%
{\hspace*{.1pt}\hspace*{\fill}\BOX\vskip\baselineskip}

\begin{document}

\title{Existence of planar non-symmetric stationary flows \\ with large flux in an exterior disk}

\author{Mitsuo Higaki}
\date{}

\maketitle

\noindent {\bf Abstract.}\ This paper is concerned with the two-dimensional stationary Navier-Stokes system in the domain exterior to the unit disk. The existence of solutions with critical decay $O(|x|^{-1})$ is established around some explicit flows with large flux. The solutions are obtained for non-symmetric external forces, and moreover, are unique in a certain class.

\medskip


\medskip

\section{Introduction}\label{sec.intro.}

We consider the two-dimensional stationary Navier-Stokes system in the exterior disk $\Omega=\{x=(x_1,x_2)\in\R^2~|~|x|>1\}$:
\begin{equation}\tag{NS}\label{eq.NS.intro}
\left\{
\begin{array}{ll}
-\Delta u + \nabla p = -u\cdot\nabla u + f&\mbox{in}\ \Omega \\
{\rm div}\,u = 0&\mbox{in}\ \Omega \\
u= \alpha x^{\bot} - \gamma x &\mbox{on}\ \partial\Omega \\
u(x)\to0&\mbox{as}\ |x|\to\infty.
\end{array}\right.
\end{equation}
The unknown functions $u=(u_1(x),u_2(x))$ and $p=p(x)$ are respectively the velocity field and the pressure field. The function $f=(f_1(x),f_2(x))$ is a given external force in $L^2(\Omega)^2$. We assume that both $\alpha$ and $\gamma$ are constants. The vector $x^\bot$ refers to $(-x_2,x_1)$. The system \eqref{eq.NS.intro} describes the motion of a viscous incompressible fluid around the disk rotating at angular velocity $\alpha$ and whose surface subjects to a normal suction velocity $-\gamma x$.

The existence and uniqueness theories for the problem \eqref{eq.NS.intro} are generally open. As for the existence, there are two fundamental difficulties. The first one is lack of certain embeddings. Let $f$ be smooth and compactly supported in $\Omega$ for brevity. Under smallness conditions, adapting the proof by Leray \cite{Leray1933} or relying on Fijita \cite{Fujita1961}, we can actually find weak solutions of \eqref{eq.NS.intro} having a finite Dirichlet integral, called $D$-solutions. Nevertheless, the bound $\|\nabla u\|_{L^2}<\infty$ itself cannot verify the condition at spatial infinity in \eqref{eq.NS.intro} in two-dimensional unbounded domains; see Korobkov, Pileckas and Russo \cite{KorobkovPileckasRusso2019, KorobkovPileckasRusso2020, KorobkovPileckasRusso2021} for recent progress on this topic. The other one is the logarithmic growth in the Green function of the exterior Stokes system, which is the source of the famous Stokes paradox; see \cite{ChangFinn1961, Galdi04, Galdibook2011, KozonoSohr1992} for descriptions. The uniqueness of solutions will be discussed in Remark \ref{rem.thm.main} (\ref{item3.rem.thm.main}) below.

These issues illustrate a delicate aspect concerning the zero condition at infinity in \eqref{eq.NS.intro}. Most existing results treating \eqref{eq.NS.intro} assume both smallness and symmetry on the data. The latter is useful for making quantities decay in space by cancellation. The reader is referred to Galdi \cite{Galdi04, Galdibook2011}, Russo \cite{Russo09}, Yamazaki \cite{Yamazaki11, Yamazaki16, Yamazaki18} and Pileckas and Russo \cite{PileckasRusso12} for the existence of solutions, and to Nakatsuka \cite{Nakatsuka15} and \cite{Yamazaki18} for the uniqueness. Relevant numerical simulations are carried out in Guillod and Wittwer \cite{GuillodWittwer2015}. We note that, if another condition were imposed at infinity, the situation would be quite different. Indeed, the recent work by Korobkov and Ren \cite{KorobkovRen2021} proves the uniqueness in the class of $D$-solutions for plane exterior Navier-Stokes flows converging to a small but non-zero constant vector field at infinity.

In this paper, we examine the problem \eqref{eq.NS.intro} from a different angle. Note that, when the external force $f$ is trivial, there is an explicit solution $(\alpha U - \gamma W, \nabla P_{\alpha,\gamma})$ given by 
\begin{align}\label{def.U.W.P}
\begin{split}
U(x) = \frac{x^\bot}{|x|^2}, \qquad
W(x) = \frac{x}{|x|^2}, \qquad 
\nabla P_{\alpha,\gamma}(x) = -\nabla\Big(\frac{|\alpha U(x) - \gamma W(x)|^2}{2}\Big), 
\end{split}
\end{align}
which is invariant under the scaling of the Navier-Stokes equations. A (non-trivial) solution in this class is called scale-critical and it represents the balance between the nonlinear and linear parts of the equations. Given this nature, it is expected that the Navier-Stokes flows around a scale-critical flow differ quantitatively from those around the trivial flow. In fact, Hillairet and Wittwer \cite{HillairetWittwer13} consider the Navier-Stokes problem in an exterior disk by perturbating the system around $(\alpha U,\nabla P_{\alpha,0})$. A crucial observation is that the decay of solutions to the corresponding linearized system is improved when $|\alpha|$ is sufficiently large, more precisely, when $|\alpha|>\sqrt{48}$. This is possible because of the structure of the equation for vorticities. Then, based on iteration to the nonlinear problem with subcritialized nonlinearity, they show the existence of solutions in the form of $u(x) = \alpha U(x) + o(|x|^{-1})$ when $|x|\to\infty$. These solutions are driven by inhomogeneous boundary data, on which no symmetries are imposed thanks to the mechanism of the proof.

The result in \cite{HillairetWittwer13} can be read as the scale-critical flow $\alpha U$ producing a stabilizing effect in view of spatial decay. In this context, we here consider such an effect of $\alpha U - \gamma W$ and address the existence of solutions to the problem \eqref{eq.NS.intro}. Briefly, we will see that the stabilization is effective for any $\alpha$ if $\gamma>2$. As an application, we obtain the Navier-Stokes flows for non-symmetric external forces based on the perturbation. We also describe the asymptotics near spatial infinity in terms of the decay rate of external forces.

Let us introduce some notations to state the main result. For $s\ge0$, we define 
\begin{align}\label{def.L^infty_s}
\begin{split}
L^\infty_s(\Omega) &= \{ f\in L^\infty(\Omega)~|~ \|f\|_{L^\infty_s} <\infty \}, \qquad
\|f\|_{L^\infty_s} := \esssup_{x\in\Omega}\, |x|^s |f(x)|, 
\end{split}
\end{align}
which is a Banach space under the norm $\|\cdot\|_{L^\infty_s}$. Taking advantage of symmetry, we introduce the polar coordinates on $\Omega$ as 
\begin{align*}
&x_1 = r\cos \theta, \qquad
x_2 = r\sin \theta, 
\quad 
r= |x| \ge 1, \quad \theta\in [0,2\pi),\\
&
{\bf e}_r = \frac{x}{|x|}, \qquad 
{\bf e}_\theta = \frac{x^\bot }{|x|} = \partial_\theta {\bf e}_r. 
\end{align*}
For given vector field $v=(v_1,v_2)$ on $\Omega$ and $n\in\Z$, we set 
\begin{align*}
v = v_r(r,\theta) {\bf e}_r + v_\theta(r,\theta) {\bf e}_\theta, \qquad
v_r = v\cdot {\bf e}_r, \qquad
v_\theta = v\cdot {\bf e}_\theta 
\end{align*}
and denote by $\mathcal{P}_n$ the projection on the Fourier mode $n$: 
\begin{align}\label{def.P_n}
\begin{split}
\mathcal{P}_n v(r,\theta)
& = v_{r,n}(r) e^{i n \theta} {\bf e}_r + v_{\theta,n}(r) e^{i n \theta} {\bf e}_\theta, \\
v_{r,n} (r) 
& := \frac{1}{2\pi} \int_0^{2\pi} v_r(r \cos \theta, r\sin\theta) e^{-i n \theta} \dd \theta,\\
v_{\theta,n} (r) 
& := \frac{1}{2\pi} \int_0^{2\pi} v_\theta(r \cos \theta, r\sin\theta) e^{-i n \theta} \dd \theta.
\end{split}
\end{align}
Now the result is stated as follows.

%
\begin{theorem}\label{thm.main}
Let $\alpha\in\R$ and $\gamma>2$. Then, for any $2<\rho<3$ with $\rho\le\min\{\gamma,3\}$, there is a constant $\varepsilon=\varepsilon(\alpha,\gamma,\rho)>0$ such that if $f\in L^\infty_{2\rho-1}(\Omega)^2$ satisfies
\begin{align}\label{est1.thm.main}
\sum_{n\in\Z} \|\mathcal{P}_n f\|_{L^\infty_{2\rho-1}} \le \varepsilon, 
\end{align}
then there is a solution $(u,\nabla p)\in \big(\widehat{W}^{1,2}(\Omega) \cap W^{2,2}_{{\rm loc}}(\overline{\Omega}) \cap L^\infty_{1} (\Omega)\big)^2 \times L^2_{{\rm loc}} (\overline{\Omega})^2$ of \eqref{eq.NS.intro} unique in a suitable set of functions (see Section \ref{sec.nonlin.} for the precise definition). Moreover, when $|x|\rightarrow \infty$, the solution $u=u(x)$ behaves as  
\begin{align}\label{est2.thm.main} 
u(x) = \Big(\alpha \frac{x^{\bot}}{|x|^2} -\gamma \frac{x}{|x|^2}\Big) + O(|x|^{-\rho+1}), \qquad
{\rm rot}\,u(x) = O(|x|^{-\rho}). 
\end{align}
\end{theorem}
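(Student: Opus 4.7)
The plan is to set $v := u - (\alpha U - \gamma W)$ and solve for the perturbation $v$. Since $(\alpha U - \gamma W, \nabla P_{\alpha,\gamma})$ is itself a homogeneous solution carrying the correct boundary data, subtracting from \eqref{eq.NS.intro} recasts the problem as
\[
-\Delta v + (\alpha U - \gamma W)\cdot\nabla v + v\cdot\nabla(\alpha U - \gamma W) + \nabla q + v\cdot\nabla v = f, \quad \mathrm{div}\,v = 0,
\]
with $v|_{\partial\Omega} = 0$ and $v(x)\to 0$. The key structural fact is that $U$ and $W$ are both divergence-free and irrotational on $\Omega$; taking the scalar curl therefore eliminates the pressure and the zeroth-order linearization term, reducing the problem to a convection-diffusion equation for the vorticity $\omega := \mathrm{rot}\,v$,
\[
-\Delta\omega + (\alpha U - \gamma W)\cdot\nabla\omega = \mathrm{rot}\,f - v\cdot\nabla\omega,
\]
in which the drift reads $\tfrac{\alpha}{r^2}\partial_\theta - \tfrac{\gamma}{r}\partial_r$ in polar coordinates and therefore acts diagonally on the Fourier modes $\omega_n(r)e^{i n\theta}$.

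I would then analyze the linearized vorticity equation mode by mode. On the $n$-th mode the equation is the Euler-type ODE
\[
-\omega_n'' - \tfrac{1+\gamma}{r}\omega_n' + \tfrac{n^2 + i\alpha n}{r^2}\omega_n = g_n,
\]
with indicial exponents $\lambda_{n,\pm} = \tfrac12\bigl(-\gamma \pm \sqrt{\gamma^2 + 4n^2 + 4i\alpha n}\bigr)$. The elementary identity $\mathrm{Re}\sqrt{a+ib} = \bigl((\sqrt{a^2+b^2}+a)/2\bigr)^{1/2} \ge \sqrt{a}$ for $a\ge 0$ yields the uniform bound $\mathrm{Re}\,\lambda_{n,-} \le -\gamma$ for every $n\in\Z$ and every $\alpha\in\R$, which is the quantitative form of the stabilization by $-\gamma W$ that turns on as soon as $\gamma > 2$. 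Solving each mode by variation of parameters (anchored on the decaying solution at infinity, with an admixture of the homogeneous solution enforcing the boundary condition on $\partial\Omega$) and reconstructing the velocity through a stream-function formulation, I expect linear estimates
\[
\|\mathcal{P}_n v\|_{L^\infty_{\rho-1}} + \|\mathcal{P}_n \mathrm{rot}\,v\|_{L^\infty_\rho} \;\le\; C(\alpha,\gamma,\rho)\,\|\mathcal{P}_n f\|_{L^\infty_{2\rho-1}},
\]
with constants uniform in $n$; the constraint $\rho \le \gamma$ guarantees convergence of the weighted Green-function integrals at infinity, while $\rho < 3$ keeps the velocity decay above the $|x|^{-2}$ barrier associated with the Stokes paradox. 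Summing in $n$ against \eqref{est1.thm.main} then produces a bounded linear solution operator into the Wiener-type space $\mathcal{X}$ of vector fields $v$ with $\sum_{n\in\Z}\|\mathcal{P}_n v\|_{L^\infty_{\rho-1}} + \sum_{n\in\Z}\|\mathcal{P}_n \mathrm{rot}\,v\|_{L^\infty_\rho} < \infty$.

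The nonlinear problem is then closed by the contraction mapping theorem, applied in the ball of radius $O(\varepsilon)$ in $\mathcal{X}$ to the map sending $v$ to the linear solution with source $f - v\cdot\nabla v$. The quadratic term gains a full order of decay, $|v\cdot\nabla v|\lesssim |x|^{-2\rho+1}$, placing it in the same class $L^\infty_{2\rho-1}$ as $f$, so the smallness assumption \eqref{est1.thm.main} closes the contraction. The asymptotic \eqref{est2.thm.main} is then immediate from $u = \alpha U - \gamma W + v$ together with the $\mathcal{X}$-bound on $v$, and uniqueness in the designated ball follows from the standard difference estimate.

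The main obstacle I foresee is ensuring the linear estimates are truly uniform in the Fourier mode $n$ in a form compatible with the Wiener-type summation in \eqref{est1.thm.main}. The $n=0$ mode is particularly delicate: the indicial exponents degenerate to $\{0, -\gamma\}$, the nontrivial homogeneous solution is the constant vorticity (which must be excluded to avoid uncontrolled growth), and the zero-flux compatibility inherited from $v|_{\partial\Omega} = 0$ has to be imposed explicitly. A secondary difficulty is bridging the gap between the $L^\infty_{2\rho-1}$ hypothesis on $f$ and the effective use of $\mathrm{rot}\,f$ in the vorticity formulation, presumably by splitting $v$ into a Stokes-type corrector whose source $f$ is used directly and a residual governed by the mode-by-mode vorticity analysis above.
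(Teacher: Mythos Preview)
Your strategy coincides with the paper's: perturb around $\alpha U-\gamma W$, analyze the linearized vorticity equation mode by mode, rebuild the velocity through the Biot--Savart/stream-function formula, and close by a contraction in the Wiener-type space $\sum_n\|\mathcal P_n(\cdot)\|_{L^\infty_{\rho-1}}$. The obstacles you flag are each resolved more simply than you propose. First, the paper uses the identity $u\cdot\nabla u=u^{\perp}\,\mathrm{rot}\,u+\nabla(|u|^2/2)$ at the outset, so the perturbed equation carries $(\alpha U-\gamma W)^{\perp}\mathrm{rot}\,v$ and the nonlinearity $v^{\perp}\mathrm{rot}\,v$; this makes the quadratic term a pointwise product of $v$ and $\mathrm{rot}\,v$ and removes the mismatch between your norm (which controls $\mathrm{rot}\,v$) and your bound on $v\cdot\nabla v$ (which needs the full gradient). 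In any case the paper's linear estimate, read off directly from the stream-function representation, controls $\|\mathcal P_n\nabla v\|_{L^\infty_\rho}$ as well. Second, the zero mode is not run through the vorticity at all: since the Biot--Savart formula \eqref{def.psi_n} is unavailable for $n=0$, the paper solves the velocity ODE \eqref{eq.S.zeromode} for $v_{\theta,0}$ directly, with homogeneous solutions $r^{-1}$ and $r^{-\gamma+1}$, both decaying once $\gamma>2$, so no constant-vorticity exclusion arises. Third, the passage from $(\mathrm{rot}\,f_n)_n$ back to $f_{r,n},f_{\theta,n}$ is a single integration by parts inside the variation-of-parameters integral (see \eqref{rep2.vol.nFourier}); no Stokes-type corrector is needed.
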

%

%
\begin{remark}\label{rem.thm.main}
\begin{enumerate}[(i)]
\item\label{item1.rem.thm.main} As far as the author knows, Theorem \ref{thm.main} is the first result obtaining the plane exterior Navier-Stokes flows zero at spatial infinity, non-symmetric and unique in a certain class. In \cite{HillairetWittwer13} dealing with the case $\gamma=0$, the uniqueness of solutions seems not to be provided. More to the point, the proof should not be easy since the existence of solutions in \cite{HillairetWittwer13} is verified by the intermediate value theorem. The inconvenience is closely related to the structure of the zero mode of the linearized system \eqref{eq.S.zeromode} below.

\item\label{item2.rem.thm.main} Contrary to \cite{HillairetWittwer13} where $|\alpha|>\sqrt{48}$ is assumed, we do not need any restrictions on $\alpha$ in Theorem \ref{thm.main}. Thus it is interpreted that the stabilization from $-\gamma W$ with $\gamma>2$ exceeds that from $\alpha U$, and hence, the theorem holds thanks to the presence of $\gamma$.

\item\label{item3.rem.thm.main} Theorem \ref{thm.main} does not state the uniqueness of $D$-solutions of \eqref{eq.NS.intro}, which is violated even when $f=0$ by the following Hamel solutions. For $\gamma>2$ and $t\in\R$, we set 
\begin{align*}
\begin{split}
A(x) &= \frac{1}{\gamma-2} (1 - |x|^{-\gamma+2}) U(x), \\
\nabla Q(x) &= - \nabla\Big(\frac{|\alpha U(x) - \gamma W(x) + t A(x)|^2}{2}\Big) \\
&\quad
-\nabla \bigg[t|x|^{-\gamma} \Big\{\frac{\alpha}{\gamma}+ \frac{t}{\gamma-2}\Big(\frac{1}{\gamma} - \frac{1}{2(\gamma-1)}|x|^{-\gamma+2}\Big)\Big\} \bigg]. 
\end{split}
\end{align*}
Then $\{(\alpha U - \gamma W + t A, \nabla Q)\}_{t\in\R}$ is a family of explicit $D$-solutions of \eqref{eq.NS.intro} with $f=0$. For more detailed discussion, the reader is referred to \cite[Section X\hspace{-.1em}I\hspace{-.1em}I.2]{Galdi04}.

\item\label{item4.rem.thm.main} The stability of the flows in Theorem \ref{thm.main} is an open problem. The global $L^2$-stability of $-\gamma W$ is proved by Guillod \cite{Guillod17} and the local $L^2$-stability of $\alpha U - \gamma W$ by Maekawa \cite{Maekawa17i, Maekawa17ii} without any symmetries on initial perturbations. However, these results essentially rely on the smallness of the coefficients and, therefore, cannot be adapted to the flows in Theorem \ref{thm.main}, even if the Hardy inequality applies to $u-(\alpha U-\gamma W)$. 
\end{enumerate}
\end{remark}
%

The ingredients of the proof of Theorem \ref{thm.main} are the following.

\noindent {\bf (I) Perturbation.} We will construct the solution $(u,\nabla p)$ of \eqref{eq.NS.intro} in the form of 
\begin{align}\label{intro.ansatz}
u = \alpha U - \gamma W + v, \qquad
\nabla p = \nabla\Big(-\frac{|u|^2}2 + q\Big). 
\end{align}
The pair $(v,\nabla q)$ is understood as the perturbation from $(\alpha U - \gamma W, \nabla P_{\alpha,\gamma})$ in response to the external force $f$. Inserting the ansatz \eqref{intro.ansatz} into \eqref{eq.NS.intro} and using the relation 
\begin{align}\label{eq.nonlin.rot}
u\cdot\nabla u = u^{\bot} {\rm rot}\,u + \nabla\Big(\frac{|u|^2}2\Big), \qquad 
{\rm rot}\,u := \partial_1 u_2 - \partial_2 u_1, 
\end{align}
as well as ${\rm rot}\,U={\rm rot}\,W=0$, we see that $(v,\nabla q)$ solves the perturbed system 
\begin{equation}\tag{$\widetilde{\mbox{NS}}$}\label{eq.NS.tilde.intro}
\left\{
\begin{array}{ll}
-\Delta v + (\alpha U - \gamma W)^\bot {\rm rot}\,v + \nabla q 
= -v^{\bot} {\rm rot}\,v + f &\mbox{in}\ \Omega \\
{\rm div}\,v =0&\mbox{in}\ \Omega \\
v=0&\mbox{on}\ \partial\Omega \\
v(x)\to0&\mbox{as}\ |x|\to\infty. 
\end{array}\right.
\end{equation}
Our next task is to prove the existence of solutions to the problem \eqref{eq.NS.tilde.intro}.

\noindent {\bf (I\hspace{-.1em}I) Linear analysis.} The linearized system of \eqref{eq.NS.tilde.intro} around $v=0$ is written as 
\begin{equation}\tag{$\widetilde{\mbox{S}}$}\label{eq.S.tilde.intro}
\left\{
\begin{array}{ll}
-\Delta v + (\alpha U - \gamma W)^\bot {\rm rot}\,v + \nabla q 
= f &\mbox{in}\ \Omega \\
{\rm div}\,v =0&\mbox{in}\ \Omega \\
v=0&\mbox{on}\ \partial\Omega \\
v(x)\to0&\mbox{as}\ |x|\to\infty. 
\end{array}\right.
\end{equation}
Since \eqref{eq.S.tilde.intro} is invariant under the action of $\mathcal{P}_n$ in \eqref{def.P_n}, one can study it in each Fourier mode.

As mentioned in Remark \ref{rem.thm.main} (\ref{item2.rem.thm.main}), the presence of the parameter $\gamma>2$ stabilizes the decay of solutions of \eqref{eq.S.tilde.intro}. A sharp contrast with \cite{HillairetWittwer13} is that this effect is exerted on the zero mode of the velocity $\mathcal{P}_0v=v_{r,0}(r){\bf e}_r+v_{\theta,0}(r){\bf e}_\theta$. Indeed, while $v_{r,0}(r)=0$ follows from the second and third lines in \eqref{eq.S.tilde.intro}, $v_{\theta,0}(r)$ satisfies the ordinary differential equation 
\begin{align}\label{eq.S.zeromode}
-\frac{\dd^2 v_{\theta,0}}{\dd r^2} 
-\frac{1+\gamma}r \frac{\dd v_{\theta,0}}{\dd r}  
+\frac{1-\gamma}{r^2} v_{\theta,0}
= f_{\theta,0}, \quad r>1, 
\end{align}
which is independent of $\alpha$. Then, since the Green functions of equation are $r^{-\gamma+1}$ and $r^{-1}$, the solution $v_{\theta,0}(r)$ decays subcritically if the data $f_{\theta,0}(r)$ decays fast enough.

The stabilization from $\gamma$ displays also in the non-zero modes. However, to prove Theorem \ref{thm.main}, we need to make precise the relationship between the decay of solutions and that of external forces. For this purpose, we derive the representation formula for the $n$-mode of the velocity $\mathcal{P}_nv=v_{r,n}(r)e^{i n \theta}{\bf e}_r+v_{\theta,n}(r)e^{i n \theta}{\bf e}_\theta$ using the streamfunction-vorticity variant of the system in \cite{HillairetWittwer13} and the Biot-Savart law in \cite{Maekawa17i, Maekawa17ii, GallagherHigakiMaekawa19, Higaki19}. When verifying the formula, we essentially use the decay of the vorticity subcritically improved by $\gamma$.

This paper is organized as follows. In Section \ref{sec.preliminaries}, we collect preliminary results from vector calculus in the polar coordinates. In Section \ref{sec.lin.}, we study the linear problem \eqref{eq.S.tilde.intro} decomposed into the Fourier modes. In Section \ref{sec.nonlin.}, we prove Theorem \ref{thm.main}.

{\bf Notations.} We denote by $C$ the constant and by $C(a,b,c,\ldots)$ the constant depending on $a,b,c,\ldots$. Both of these may vary from line to line. We use the function spaces 
$$\widehat{W}^{1,2}(\Omega)
=\{p\in L^2_{{\rm loc}}(\overline{\Omega})~|~\nabla p\in L^2(\Omega)^2\}, \quad
C^\infty_{0,\sigma}(\Omega)
=\{\varphi\in C^\infty_0(\Omega)^2~|~{\rm div}\,\varphi=0\}$$
and $L^2_{\sigma}(\Omega)$ which is the completion of $C^\infty_{0,\sigma}(\Omega)$ in the $L^2$-norm. If there is no confusion, we use the same notation to denote the quantities concerning scalar-, vector- or tensor-valued functions. For example, $\langle \cdot, \cdot\rangle$ denotes to the inner product on $L^2(\Omega)$, $L^2(\Omega)^2$ or $L^2(\Omega)^{2\times2}$.

\section{Preliminaries}\label{sec.preliminaries}
This section collects useful facts about the vector calculus in the exterior disk $\Omega$.
%
\subsection{Operators in the polar coordinates}\label{subsec.calc.in.polar.coord.}
%
The following formulas will be used: 
\begin{align}\label{formulas.polar}
\begin{split}
{\rm div}\,v 
& = \partial_1 v_1 + \partial_2 v_2 
=  \frac1r \partial_r (r v_r) + \frac1r \partial_\theta v_\theta, \\
{\rm rot}\,v 
& = \partial_1 v_2 - \partial_2 v_1 
=  \frac1r \partial_r (r v_\theta) - \frac1r \partial_\theta v_r,  \\
|\nabla v |^2 
& = 
|\partial_r v_r|^2 + |\partial_r v_\theta|^2 
+ \frac{1}{r^2} 
(|\partial_\theta v_r - v_\theta |^2 + |v_r + \partial_\theta v_\theta |^2), 
\end{split}
\end{align}
and
\begin{align*}
\begin{split}
-\Delta v 
& = \Big\{ -\partial_r \Big( \frac1r \partial_r (r v_r ) \Big)  
- \frac{1}{r^2} \partial_\theta^2 v_r 
+ \frac{2}{r^2} \partial_\theta v_\theta \Big\} {\bf e}_r \\
&\quad
+  \Big\{ - \partial_r \Big( \frac1r \partial_r (r v_\theta) \Big) 
- \frac{1}{r^2} \partial_\theta^2 v_\theta 
- \frac{2}{r^2} \partial_\theta v_r \Big\} {\bf e}_\theta.
\end{split}
\end{align*}
%

%
\subsection{Fourier series}\label{subsec.fourier.series}
%
Let $n\in \Z$. We define, for a vector field $v = v(r,\theta)$ on $\Omega$, 
\begin{align}\label{def.v_n}
v_n(r,\theta) = \mathcal{P}_n v(r,\theta), 
\end{align}
where $\mathcal{P}_n$ is the projection defined in \eqref{def.P_n}, for a scalar function $\omega=\omega(r,\theta)$ on $\Omega$, 
\begin{align}\label{def.omega_n}
\begin{split}
\mathcal{P}_n\omega(r,\theta)
&= 
\bigg( \frac{1}{2\pi} \int_0^{2\pi} \omega (r \cos s, r\sin s) e^{- i n s} \dd s \bigg) e^{i n \theta}, \\
\omega_n(r)
&= (\mathcal{P}_n \omega) e^{-i n \theta}, 
\end{split}
\end{align}
and for a function space $X(\Omega)\subset L^{1}_{\rm loc}(\overline{\Omega})^2$ or $X(\Omega)\subset L^{1}_{\rm loc}(\overline{\Omega})$, 
\begin{align*}
\mathcal{P}_n X(\Omega)
=\big\{\mathcal{P}_n f ~\big|~ f\in X(\Omega)\big\}. 
\end{align*}
Our definition of $f_n$ differs according to whether $f$ is vectorial or scalar. The former and latter are respectively defined in \eqref{def.v_n} as $f_n=\mathcal{P}_n f$ and in \eqref{def.omega_n} as $f_n=(\mathcal{P}_n f) e^{-i n \theta}$.

%
\subsection{Biot-Savart law}\label{subsec.biot-savart}
%
For a given $\omega\in L^\infty_2(\Omega)$, we consider the Poisson equation
\begin{equation*}
\left\{
\begin{array}{ll}
-\Delta \psi = \omega&\mbox{in}\ \Omega \\
\psi=0&\mbox{on}\ \partial\Omega. 
\end{array}\right.
\end{equation*}
The solution $\psi$ is called the streamfunction. Let $\omega\in\mathcal{P}_n L^\infty_2(\Omega)$ with $|n|\ge 1$ and set $\psi_n = (\mathcal{P}_n \psi) e^{-i n \theta}$ and $\omega_n = (\mathcal{P}_n \omega) e^{-i n \theta}$. In the polar coordinates, $\psi_n=\psi_n(r)$ solves the ordinary differential equation
\begin{align}\label{eq.streamfunc.}
-\frac{\dd^2 \psi_n}{\dd r^2}  
- \frac{1}{r} \frac{\dd \psi_n}{\dd r} 
+ \frac{n^2}{r^2} \psi_n 
= \omega_n,  \quad r>1,  
\qquad 
\psi_n (1) =0.
\end{align}
The decaying solution $\psi_n= \psi_n[\omega_n]$ of \eqref{eq.streamfunc.} is given by
\begin{align}\label{def.psi_n}
\begin{split}
\psi_n[\omega_n] (r) 
&= \frac{1}{2 |n|} 
\bigg(-d_n [\omega_n] r^{-|n|} \\
&\qquad\qquad
+ r^{-|n|} \int_1^r s^{|n|+1} \omega_n (s) \dd s   
+ r^{|n|}\int_r^\infty s^{-|n|+1} \omega_n (s) \dd s \bigg ), \\
d_n [\omega_n] 
&:= \int_1^\infty s^{-|n|+1} \omega_n (s) \dd s. 
\end{split}
\end{align}
Then the following vector field $V_n[\omega_n]$ is called the Biot-Savart law:
\begin{align}\label{def.Biot-Savart}
\begin{split}
&V_n [\omega_n](r,\theta)
= V_{r,n}[\omega_n](r) e^{i n \theta}{\bf e}_r  
+  V_{\theta,n}[\omega_n](r) e^{i n\theta} {\bf e}_\theta, \\
& V_{r,n} [\omega_n] 
= \frac{i n}{r} \psi_n [\omega_n], \qquad 
V_{\theta,n} [\omega_n]
=-\frac{\dd}{\dd r} \psi_n [\omega_n].
\end{split}
\end{align}
It is straightforward to see that
\begin{align}\label{properties.Biot-Savart}
\begin{split}
&{\rm div}\,V_n[\omega_n] = 0, 
\qquad
{\rm rot}\,V_n[\omega_n] = \omega_n e^{in\theta}, 
\qquad 
({\bf e}_r \cdot V_n [\omega_n])|_{\partial\Omega} = 0. 
\end{split}
\end{align}
If additionally $\omega\in L^\infty_\rho(\Omega)$ with $\rho>2$, one can check that $V_n [\omega_n]\in W^{1,2}(\Omega)^2$.

We state two propositions related to the Biot-Savart law. The first one is implicitly contained in \cite[Proposition 2.6 and Lemma 3.1]{Maekawa17i} and the second one has the same content as \cite[Corollary 2.7]{Maekawa17i}. However, we provide slightly more concise proofs for completeness.

%
\begin{proposition}\label{prop.biot-savart}
Let $|n|\ge1$ and $v_n\in \mathcal{P}_n W^{1,2}_0(\Omega)^2$. Set $\omega_n=({\rm rot}\,v_n)_n$. Then, if ${\rm div}\,v_n=0$ and $\omega_n\in L^\infty_\rho(\Omega)$ for some $\rho>2$, we have $v_n=V_n[\omega_n]$ and $d_n[\omega_n]=0$ in \eqref{def.psi_n}. 
\end{proposition}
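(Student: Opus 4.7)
The plan is to exploit the two-dimensional divergence-free structure to introduce a stream function $\phi_n(r)$ for $v_n$, derive the ODE \eqref{eq.streamfunc.} for $\phi_n$, and then match the two scalar Dirichlet conditions on $\partial\Omega$ together with the decay forced by $v_n\in W^{1,2}$ to simultaneously pin down the Biot-Savart representation and force the constraint $d_n[\omega_n]=0$.

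First, I would use the polar expression of ${\rm div}\,v_n=0$, namely $(rv_{r,n})'+inv_{\theta,n}=0$, to define (for $|n|\ge 1$) the scalar function $\phi_n(r):=\frac{r}{in}v_{r,n}(r)$. A short verification then gives
\[
v_{r,n}(r)=\frac{in}{r}\phi_n(r), \qquad v_{\theta,n}(r)=-\phi_n'(r),
\]
and the Dirichlet condition $v_n|_{\partial\Omega}=0$ separates into the two independent scalar identities $\phi_n(1)=0$ and $\phi_n'(1)=0$. A direct polar-coordinate computation of ${\rm rot}\,v_n$ shows that $\phi_n$ satisfies the ODE in \eqref{eq.streamfunc.} with right-hand side $\omega_n$; the homogeneous solutions being $r^{|n|}$ and $r^{-|n|}$, the general solution admits the representation
\[
\phi_n(r)=\psi_n[\omega_n](r)+Ar^{|n|}+Br^{-|n|}, \qquad A,B\in\C.
\]

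Next, I would fix the two constants $A$ and $B$. The remark immediately preceding the proposition states that $\omega_n\in L^\infty_\rho$ with $\rho>2$ implies $V_n[\omega_n]\in W^{1,2}(\Omega)^2$, so the difference $v_n-V_n[\omega_n]$ is a divergence-free $W^{1,2}$ vector field with stream function $Ar^{|n|}+Br^{-|n|}$; the associated radial component behaves like $inAr^{|n|-1}e^{in\theta}$ as $|x|\to\infty$, which is incompatible with $L^2(\Omega)^2$ for $|n|\ge 1$ unless $A=0$. Direct inspection of \eqref{def.psi_n} gives $\psi_n[\omega_n](1)=0$, so the identity $\phi_n(1)=0$ then forces $B=0$, and consequently $v_n=V_n[\omega_n]$. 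Feeding the remaining boundary condition $\phi_n'(1)=0$ into \eqref{def.psi_n}, a direct differentiation in which the two boundary-term contributions from the integrals cancel yields
\[
\psi_n'[\omega_n](1)=\frac{1}{2}\Bigl(d_n[\omega_n]+\int_1^\infty s^{-|n|+1}\omega_n(s)\dd s\Bigr)=d_n[\omega_n],
\]
so $\phi_n'(1)=0$ is equivalent to $d_n[\omega_n]=0$, as required.

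The step I expect to require the most care is the justification that $V_n[\omega_n]\in W^{1,2}(\Omega)^2$ under only the hypothesis $\omega_n\in L^\infty_\rho$ with $\rho>2$, together with convergence of the integrals defining $\psi_n[\omega_n]$ near $r=1$ and as $r\to\infty$. These are quantitative checks that rely on the $L^\infty_\rho$ decay of $\omega_n$ and on the specific powers appearing for $|n|\ge 1$; once they are in place, ruling out $A\ne 0$ by square-integrability is elementary, and the remainder of the argument is a clean linear matching of two boundary conditions against the single remaining coefficient $B$.
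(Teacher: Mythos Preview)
Your proposal is correct. Both your argument and the paper's reach the same conclusion by showing that $v_n-V_n[\omega_n]$ must vanish, but the routes differ in execution. The paper works directly with the vector field $u:=v_n-V_n[\omega_n]$: from ${\rm div}\,u={\rm rot}\,u=0$ it deduces $\Delta u=0$, then argues that a harmonic field in $\mathcal{P}_nW^{1,2}(\Omega)^2$ with $({\bf e}_r\cdot u)|_{\partial\Omega}=0$ must be identically zero (via $\|\nabla u\|_{L^2}=0$ and the divergence-free condition), and finally reads off $d_n[\omega_n]=\psi_n'[\omega_n](1)=-v_{\theta,n}(1)=0$. You instead pass to the scalar stream function $\phi_n$, write the general solution of the second-order ODE as $\psi_n[\omega_n]+Ar^{|n|}+Br^{-|n|}$, and kill the two free constants one at a time using square-integrability and the radial Dirichlet datum; the tangential Dirichlet datum then yields $d_n[\omega_n]=0$ via the same derivative computation $\psi_n'[\omega_n](1)=d_n[\omega_n]$ that the paper uses. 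Your approach is more explicit and makes the role of each of the two boundary conditions transparent, while the paper's argument is shorter but packages the elimination of the homogeneous part into a single functional-analytic step. The ingredient you flag as needing care, namely $V_n[\omega_n]\in W^{1,2}(\Omega)^2$ under $\omega_n\in L^\infty_\rho$ with $\rho>2$, is exactly what the paper also takes for granted in the sentence preceding the proposition; once that is in hand, both arguments are complete.
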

%
%
\begin{proof}
Define $u=v_n-V_n[\omega_n]$. Then one has $u\in \mathcal{P}_n W^{1,2}(\Omega)^2$ and 
\begin{align*}
\begin{split}
&{\rm div}\,u = 0, 
\qquad
{\rm rot}\,u = 0, 
\qquad 
({\bf e}_r \cdot u)|_{\partial\Omega} = 0. 
\end{split}
\end{align*}
In particular, we have $\Delta u=0$ in the sense of distributions and hence $\|\nabla u\|_{L^2}=0$. Thus $u$ is a constant. Then $u_r=0$ follows from $({\bf e}_r \cdot u)|_{\partial\Omega} = 0$ and $u_\theta=0$ from ${\rm div}\,u=\frac1r \partial_r (r u_r) + \frac{in}r \partial_\theta u_\theta=0$. Consequently, we have $v_n= V_n[\omega_n]$ and, from \eqref{def.psi_n} and \eqref{def.Biot-Savart}, 
\begin{align*}
\begin{split}
d_n [\omega_n]
=\frac{\dd}{\dd r} \psi_n [\omega_n](1)
=-V_{\theta,n}[\omega_n](1)
=-v_{\theta,n}(1)=0.
\end{split}
\end{align*}
The proof is complete.
\end{proof}
%

%
\begin{proposition}\label{prop.rot.zero}
Let $|n|\ge1$ and $f_n\in \mathcal{P}_n L^{2}(\Omega)^2$. Then, if ${\rm rot}\,f_n = 0$ in the sense of distributions, we have $f=\nabla \mathcal{P}_n p$ for some $\mathcal{P}_n p\in \mathcal{P}_n \widehat{W}^{1,2}(\Omega)$. 
\end{proposition}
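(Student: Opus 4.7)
The plan is to exhibit the potential $p$ explicitly by integrating the polar-coordinate relation that $\mathrm{rot}\,f_n=0$ imposes on the radial Fourier profiles. Writing $f_n = f_{r,n}(r) e^{in\theta} {\bf e}_r + f_{\theta,n}(r) e^{in\theta} {\bf e}_\theta$, so that $f_{r,n}, f_{\theta,n} \in L^2((1,\infty), r\,dr)$, I would first test the condition $\mathrm{rot}\,f_n=0$ against functions of the form $\varphi(r,\theta)=\chi(r)e^{-in\theta}$ with $\chi \in C_c^\infty((1,\infty))$. Using the pairing $\langle \mathrm{rot}\,f_n,\varphi\rangle=-\langle f_n,\nabla^\perp\varphi\rangle$ together with the polar formula $\nabla^\perp\varphi=\chi'(r)e^{-in\theta}{\bf e}_\theta+\tfrac{in}{r}\chi(r)e^{-in\theta}{\bf e}_r$, one obtains
\[
\int_1^\infty \bigl[\,in f_{r,n}(r)\chi(r) + r f_{\theta,n}(r)\chi'(r)\,\bigr]\,{\rm d}r = 0
\qquad \forall\,\chi\in C_c^\infty((1,\infty)),
\]
which is exactly the distributional identity $\frac{{\rm d}}{{\rm d}r}(r f_{\theta,n}) = in\, f_{r,n}$ on $(1,\infty)$. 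Since $f_{r,n}\in L^1_{{\rm loc}}$, it follows that $r f_{\theta,n}$ admits an absolutely continuous representative.

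Using $|n|\ge 1$, I then set
\[
p_n(r):=\frac{1}{in}\,r f_{\theta,n}(r), \qquad p(r,\theta):=p_n(r)\,e^{in\theta},
\]
so that $p\in\mathcal{P}_n L^1_{{\rm loc}}(\overline{\Omega})$ by construction. Applying the polar gradient formula $\nabla p = \partial_r p_n\,e^{in\theta}\,{\bf e}_r + \tfrac{in}{r}p_n\,e^{in\theta}\,{\bf e}_\theta$, the $\theta$-component equals $f_{\theta,n}\,e^{in\theta}$ immediately, while the $r$-component equals $p_n'(r)e^{in\theta}=f_{r,n}(r)e^{in\theta}$ almost everywhere thanks to the distributional identity just established. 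Hence $\nabla p = f_n$ a.e.\ on $\Omega$.

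It remains to check that $p\in \mathcal{P}_n\widehat{W}^{1,2}(\Omega)$. The condition $\nabla p = f_n\in L^2(\Omega)^2$ is automatic, and for local square-integrability one has, for every $R>1$, the bound
\[
\|p\|_{L^2(\Omega\cap B_R)}^2
= 2\pi\int_1^R |p_n(r)|^2\,r\,{\rm d}r
\le \frac{2\pi R^2}{n^2}\int_1^R |f_{\theta,n}(r)|^2\,r\,{\rm d}r
\le \frac{R^2}{n^2}\,\|f_n\|_{L^2(\Omega)}^2,
\]
which places $p$ in $L^2_{{\rm loc}}(\overline{\Omega})$.

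The only technical point worth care is the first step, namely translating the global distributional condition $\mathrm{rot}\,f_n=0$ into the one-dimensional distributional ODE for the radial profiles; this is what lets us work with an AC representative of $rf_{\theta,n}$ and differentiate it a.e. The assumption $|n|\ge 1$ enters both through the division by $in$ in the definition of $p_n$ and, implicitly, through the orthogonality $\int_0^{2\pi} e^{in\theta}\,{\rm d}\theta = 0$, which prevents a zero-mode circulation obstruction on the non-simply-connected domain $\Omega$.
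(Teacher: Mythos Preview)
Your argument is correct and is genuinely different from the paper's. You construct the potential explicitly: testing ${\rm rot}\,f_n=0$ against separated functions $\chi(r)e^{-in\theta}$ yields the one-dimensional distributional identity $\frac{\dd}{\dd r}(r f_{\theta,n})=in\,f_{r,n}$, whence $p_n(r)=\frac{1}{in}\,r f_{\theta,n}(r)$ is absolutely continuous with $p_n'=f_{r,n}$, and the polar gradient formula gives $\nabla(p_n e^{in\theta})=f_n$ in $L^2(\Omega)^2$. The paper instead argues abstractly: it shows $\langle f_n,\varphi_n\rangle=0$ for every $\varphi_n\in\mathcal{P}_n C^\infty_{0,\sigma}(\Omega)$ by writing such $\varphi_n$ via the Biot--Savart law $V_n[\omega_n]$ (Proposition~\ref{prop.biot-savart}), observing that the associated streamfunction $\psi_n[\omega_n]$ is compactly supported, and then integrating by parts so that the curl-free condition applies directly; the existence of $p$ then comes from the Helmholtz decomposition in $L^2(\Omega)^2$. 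Your route is more elementary and self-contained (no Helmholtz decomposition, no streamfunction machinery), and it makes the role of $|n|\ge1$ completely transparent through the division by $in$. The paper's route has the advantage of reusing the Biot--Savart representation developed for Proposition~\ref{prop.biot-savart}, which is the workhorse in Section~\ref{sec.lin.}, so it fits the surrounding narrative more naturally. Either proof is perfectly adequate; the one minor point you could make more explicit is that the pointwise identity $\nabla p=f_n$ obtained from the polar formula is indeed the weak gradient, which follows because the polar coordinate map is a smooth diffeomorphism on $\Omega$ and $p_n\in W^{1,2}_{{\rm loc}}((1,\infty))$.
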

%
%
\begin{proof}
We only need to show that $\langle f_n, \varphi_n \rangle=0$ for all $\varphi_n\in \mathcal{P}_n C^\infty_{0,\sigma}(\Omega)$. Then the assertion follows from the Helmholtz decomposition in $L^2(\Omega)^2$. Note that, for $f_n\in \mathcal{P}_n C^\infty_0(\Omega)^2$ and $\mathcal{P}_n\psi\in \mathcal{P}_n C^\infty_0(\Omega)$, by integration by parts we have 
\begin{align*}
\begin{split}
\langle {\rm rot}\,f_n, \psi_n \rangle
&= 
\int_{0}^{2\pi} \int_{1}^{\infty}
\Big(
\frac{1}{r} \frac{\dd}{\dd r}(rf_{\theta,n}(r)) 
- \frac{in}{r} f_{r,n}(r)
\Big) \overline{\psi_n(r)} 
r \dd r \dd \theta \\
&= 
\int_{0}^{2\pi} \int_{1}^{\infty}
\Big(f_{r,n}(r) 
\overline{\frac{in}{r}\psi_n(r)}
- f_{\theta,n}(r) 
\overline{\frac{\dd}{\dd r} \psi_n(r)}
\Big) 
r \dd r \dd \theta. 
\end{split}
\end{align*}

Let $\varphi_n\in \mathcal{P}_n C^\infty_{0,\sigma}(\Omega)$ and set $\omega_n=({\rm rot}\,\varphi_n)_n$. By Proposition \ref{prop.biot-savart}, $\varphi_n$ is represented by the Biot-Savart law as $\varphi_n=V_n[\omega_n]$. Then the definition of $V_{r,n}[\omega_n](r)$ in \eqref{def.Biot-Savart} implies that the streamfunction $\psi(r,\theta)=\psi_n[\omega_n](r) e^{in\theta}$ is smooth and compactly supported in $\Omega$. Thus the condition that ${\rm rot}\,f_n = 0$ in the sense of distributions leads to 
\begin{align*}
\begin{split}
\int_{0}^{2\pi} \int_{1}^{\infty}
\Big(f_{r,n}(r) 
\overline{\frac{in}{r} \psi_n [\omega_n](r)}
- f_{\theta,n}(r) 
\overline{\frac{\dd}{\dd r} \psi_n [\omega_n](r)}
\Big) r \dd r \dd \theta
= 0. 
\end{split}
\end{align*}
The left-hand side can be rewritten as $\langle f_n, V_n[\omega_n] \rangle = \langle f_n, \varphi_n \rangle$. Consequently, we have $\langle f_n, \varphi_n \rangle=0$ for all $\varphi_n\in \mathcal{P}_n C^\infty_{0,\sigma}(\Omega)$. This completes the proof. 
\end{proof}
%

\section{Linearized problem}\label{sec.lin.}

In this section, we study the linearized problem 
\begin{equation}\tag{$\widetilde{\mbox{S}}$}\label{eq.S}
\left\{
\begin{array}{ll}
-\Delta v + (\alpha U - \gamma W)^\bot {\rm rot}\,v + \nabla q =f&\mbox{in}\ \Omega \\
{\rm div}\,v =0&\mbox{in}\ \Omega \\
v=0&\mbox{on}\ \partial\Omega \\
v(x)\to0&\mbox{as}\ |x|\to\infty.
\end{array}\right.
\end{equation}
Let $n\in\Z$.  Applying $\mathcal{P}_n$ in \eqref{def.P_n} to \eqref{eq.S}, we see that $(v_{r,n}(r),v_{\theta,n}(r))$ and $q_n(r)$ satisfy 
\begin{align}
&\begin{aligned}\label{eq.polar.vr}
&-\frac{\dd}{\dd r} \Big(\frac1r\frac{\dd}{\dd r} (r v_{r,n})\Big)
+ \frac{n^2}{r^2} v_{r,n}
+ \frac{2in}{r^2} v_{\theta,n} \\
&\qquad\qquad
- \frac{\alpha}{r^2} \Big(\frac{\dd}{\dd r} (r v_{\theta,n}) - in v_{r,n}\Big)
+ \partial_r q_n 
= f_{r,n}, \quad r>1, \\
\end{aligned} \\
&\begin{aligned}\label{eq.polar.vtheta}
&-\frac{\dd}{\dd r} \Big(\frac1r\frac{\dd}{\dd r} (r v_{\theta,n})\Big) 
+ \frac{n^2}{r^2} v_{\theta,n}
- \frac{2in}{r^2} v_{r,n} \\
&\qquad\qquad
- \frac{\gamma}{r^2} \Big(\frac{\dd}{\dd r} (r v_{\theta,n}) - in v_{r,n}\Big)
+ \frac{i n}{r} q_n 
= f_{\theta,n}, \quad r>1, 
\end{aligned}
\end{align}
the divergence-free and the no-slip boundary conditions
\begin{align}\label{eq.polar.div-free.no-slip}
\frac{\dd}{\dd r}(r v_{r,n}) + in v_{\theta,n}=0, 
\qquad v_{r,n}(1)=v_{\theta,n}(1)=0, 
\end{align}
and the condition at infinity
\begin{align}\label{eq.polar.spatial.infinity}
|v_{r,n}(r)| + |v_{\theta,n}(r)| \to 0, \quad r\to\infty. 
\end{align}
%

\subsection{Zero mode}\label{subsec.formulas.zero}

%
\begin{proposition}\label{prop.est.linear.zero}
For $\alpha\in\R$, $\gamma>2$, $2<\rho\le\gamma$ and $f=f_0\in \mathcal{P}_0 L^\infty_{2\rho-1}(\Omega)^2$, there is a unique solution $(v_0, \nabla\mathcal{P}_0 q)$ of \eqref{eq.S} with $v_0\in \mathcal{P}_0 L^2_\sigma(\Omega) \cap W^{1,2}_0(\Omega)^2 \cap W^{2,2}(\Omega)^2$ and $\mathcal{P}_0 q\in \mathcal{P}_0 \widehat{W}^{1,2}(\Omega)$ satisfying $v_0(r,\theta)=v_{\theta,0}(r) {\bf e}_\theta$ and 
\begin{align}
r^{\rho} |\omega_0(r)|
&\le
\frac{C}{\rho-2}
\|f_0\|_{L^\infty_{2\rho-1}}, 
\label{est1.prop.est.linear.zero} \\
r^{\rho-1} |v_{0}(r,\theta)|
+ \frac{1}{\gamma-1} r^{\rho} |\nabla v_{0}(r,\theta)|
&\le 
\frac{C}{(\gamma-2)(\rho-2)}
\|f_0\|_{L^\infty_{2\rho-1}}. 
\label{est2.prop.est.linear.zero}
\end{align}
Here $\omega_0:=({\rm rot}\,v_0)_0$. The constant $C$ is independent of $\alpha$, $\gamma$ and $\rho$.
\end{proposition}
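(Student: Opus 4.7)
My plan is to reduce to a scalar ODE and solve it explicitly via a Green function representation. First, by the divergence-free condition \eqref{eq.polar.div-free.no-slip} with $n=0$, one has $(rv_{r,0})'=0$; combined with $v_{r,0}(1)=0$, this forces $v_{r,0}\equiv 0$, so $v_0=v_{\theta,0}(r){\bf e}_\theta$. The $\theta$-component equation \eqref{eq.polar.vtheta} at $n=0$ then reduces to \eqref{eq.S.zeromode}, a second-order ODE for $v_{\theta,0}$ on $(1,\infty)$ with $v_{\theta,0}(1)=0$ and $v_{\theta,0}(r)\to 0$. The radial equation \eqref{eq.polar.vr} recovers $\partial_r \mathcal{P}_0 q=f_{r,0}+\alpha r^{-1}\omega_0$, so $\mathcal{P}_0 q$ is determined up to a constant once $\omega_0=({\rm rot}\,v_0)_0$ is known.

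The two homogeneous solutions of \eqref{eq.S.zeromode} are $\phi_1(r)=r^{-1}$ and $\phi_2(r)=r^{-\gamma+1}$; for $\gamma>2$ only $\phi_2$ belongs to $L^2(\Omega)$, while $\phi_1-\phi_2$ is the combination vanishing at $r=1$. Writing \eqref{eq.S.zeromode} in self-adjoint form $-(r^{1+\gamma}v')'+(1-\gamma)r^{\gamma-1}v=r^{1+\gamma}f_{\theta,0}$ produces the constant Wronskian $r^{1+\gamma}(\phi_1\phi_2'-\phi_2\phi_1')=2-\gamma$, which is the source of the $(\gamma-2)^{-1}$ factor in the statement. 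Variation of parameters then gives
\[
v_{\theta,0}(r) = \frac{1}{\gamma-2}\Bigl[(r^{-1}-r^{-\gamma+1})\int_r^\infty s^2 f_{\theta,0}(s)\,\dd s + r^{-\gamma+1}\int_1^r (s^\gamma-s^2)f_{\theta,0}(s)\,\dd s\Bigr],
\]
which is the unique $L^2$-solution satisfying $v_{\theta,0}(1)=0$. Equivalently, $\omega_0=v_{\theta,0}'+v_{\theta,0}/r$ solves the first-order vorticity equation $-\omega_0'-\gamma\omega_0/r=f_{\theta,0}$ supplemented with the orthogonality $\int_1^\infty s\omega_0\,\dd s=0$; this is precisely the condition for $v_{\theta,0}(r)=r^{-1}\int_1^r s\omega_0\,\dd s$ to decay faster than $r^{-1}$ and hence lie in $L^2(\Omega)$.

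The estimates use $|f_{\theta,0}(s)|\le\|f_0\|_{L^\infty_{2\rho-1}}s^{-(2\rho-1)}$. The tail integral $\int_r^\infty s^{3-2\rho}\,\dd s=r^{4-2\rho}/(2(\rho-2))$ supplies a factor $(\rho-2)^{-1}$ for $\rho>2$. The delicate piece is $r^{-\gamma}\int_1^r s^{\gamma-2\rho+1}\,\dd s$, whose naive bound carries the singularity $(\gamma-2\rho+2)^{-1}$ that degenerates as $\gamma\downarrow 2\rho-2$, a regime allowed by the hypothesis $\rho\le\gamma$. To obtain a constant uniform in $\gamma$, I would substitute $s=ru$ and use $u^\gamma\le u^\rho$ for $u\in(0,1]$ (valid since $\gamma\ge\rho$), which yields
\[
r^{-\gamma}\int_1^r s^{\gamma-2\rho+1}\,\dd s = r^{2-2\rho}\int_{1/r}^1 u^{\gamma-2\rho+1}\,\dd u \le r^{2-2\rho}\int_{1/r}^1 u^{1-\rho}\,\dd u \le \frac{r^{-\rho}}{\rho-2}.
\]
Combined with $|r^{-1}-r^{-\gamma+1}|\le r^{-1}$ for $r\ge 1$, this proves \eqref{est1.prop.est.linear.zero} with constant $C/(\rho-2)$ and the velocity part of \eqref{est2.prop.est.linear.zero} with constant $C/((\gamma-2)(\rho-2))$. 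The gradient bound follows by differentiating the explicit formula; the prefactor $(\gamma-1)$ in front of $|\nabla v_0|$ arises from $\phi_2'(r)=-(\gamma-1)r^{-\gamma}$.

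The main obstacle is exactly this uniformity in $\gamma$: any naive Green-function bookkeeping produces singular factors $(\gamma-2\rho+2)^{-1}$, and one must exploit the interplay $\gamma\ge\rho$ through the substitution above to absorb them. Once the pointwise bounds for $v_{\theta,0}$ and $\omega_0$ are in hand, the regularity $v_0\in W^{2,2}(\Omega)^2$ follows from reading \eqref{eq.S.zeromode} as a distributional identity, and $\nabla\mathcal{P}_0 q\in L^2$ from the radial equation. Uniqueness reduces to the triviality of the homogeneous kernel in $L^2(\Omega)\cap W^{1,2}_0(\Omega)^2$: the only $L^2$-homogeneous solution is $\phi_2$, and $\phi_2(1)=1\neq 0$.
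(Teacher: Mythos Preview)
Your proof is correct and follows essentially the same route as the paper: reduce to the scalar ODE \eqref{eq.S.zeromode} via $v_{r,0}\equiv 0$, write the explicit Green-function solution from the homogeneous pair $r^{-1},\,r^{-\gamma+1}$ (your formula is an algebraic regrouping of the paper's), and estimate the resulting integrals. Your substitution $s=ru$ together with $u^{\gamma}\le u^{\rho}$ on $(0,1]$ is a clean explicit justification of the bound $r^{-\gamma+1}\int_1^r s^{\gamma-2\rho+1}\,\dd s\le (\rho-2)^{-1}r^{-\rho+1}$, which the paper simply records without proof.
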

%
%
\begin{proof}
Let $n=0$ in \eqref{eq.polar.vr}--\eqref{eq.polar.spatial.infinity}. The first condition in \eqref{eq.polar.div-free.no-slip} leads to that $v_{r,0}(r)=\frac{C}{r}$ with some constant $C$. 
Then the second condition leads to $C=0$, which yields that $v_{r,0}=0$.

Thus we focus on the angular part $v_{\theta,0}=v_{\theta,0}(r)$. From \eqref{eq.polar.vtheta} and \eqref{eq.polar.div-free.no-slip}, we find that 
\begin{align}\label{ode.vel.0Fourier}
-\frac{\dd^2 v_{\theta,0}}{\dd r^2} 
-\frac{1+\gamma}r \frac{\dd v_{\theta,0}}{\dd r}  
+\frac{1-\gamma}{r^2} v_{\theta,0}
= f_{\theta,0}, \quad r>1,
\qquad v_{\theta,0}(1)=0.
\end{align}
The linearly independent solutions of the homogeneous equation of \eqref{ode.vel.0Fourier} are
\begin{equation*}
\begin{aligned}
r^{-\gamma+1} \quad \text{and} \quad r^{-1}, 
\end{aligned}
\end{equation*}
and their Wronskian is $(\gamma-2)r^{-\gamma-1}$. Hence the solution of \eqref{ode.vel.0Fourier} in $L^2(\Omega)$ is given by
\begin{equation*}
\begin{split}
v_{\theta,0}(r)
&= 
\frac{1}{\gamma-2}
\bigg\{	
-\Big(\int_{1}^{\infty} s^{2} f_{\theta,0}(s)\dd s \Big) r^{-\gamma+1} \\
&\qquad\qquad\qquad
+ r^{-\gamma+1} \int_{1}^{r} s^{\gamma} f_{\theta,0}(s)\dd s 
+ r^{-1} \int_{r}^{\infty} s^{2} f_{\theta,0}(s)\dd s
\bigg\}. 
\end{split}
\end{equation*}
Then we have
\begin{align*}
\begin{split}
\omega_0(r)
= 
\frac1r\frac{\dd}{\dd r} (r v_{\theta,0})(r)
=
\Big(\int_{1}^{\infty} s^{2} f_{\theta,0}(s)\dd s \Big) r^{-\gamma} - r^{-\gamma} \int_{1}^{r} s^{\gamma} f_{\theta,0}(s)\dd s. 
\end{split}
\end{align*}
By the assumption $f\in \mathcal{P}_0 L^\infty_{2\rho-1}(\Omega)^2$ with $2<\rho\le\gamma$ and the computations
\begin{align*}
r^{-\gamma+1} \int_{1}^{r} s^{\gamma-2\rho+1} \dd s
\le \frac{1}{\rho-2} r^{-\rho+1}, 
\qquad
r^{-1} \int_{r}^{\infty} s^{-2\rho+3} \dd s
\le \frac{1}{\rho-2} r^{-\rho+1}, 
\end{align*}
one can check that $\omega_0$ satisfies \eqref{est1.prop.est.linear.zero} and that $v_0=v_{\theta,0} {\bf e}_\theta$ belongs to $\mathcal{P}_0 L^2_\sigma(\Omega) \cap W^{1,2}_0(\Omega)^2 \cap W^{2,2}(\Omega)^2$ and satisfies \eqref{est2.prop.est.linear.zero}. The pressure $\mathcal{P}_0 q \in \widehat{W}^{1,2}(\Omega)$ is obtained by \eqref{eq.polar.vr}. Clearly, $(v_0, \nabla \mathcal{P}_0 q)$ is the unique solution of \eqref{eq.polar.vr}--\eqref{eq.polar.spatial.infinity}. The proof is complete. 
\end{proof}
%

\subsection{Non-zero modes}\label{subsec.formulas.nonzero}

By Proposition \ref{prop.est.linear.zero}, the zero mode of the solution of \eqref{eq.S} decays as fast as desired, if the external force does correspondingly. On the other hand, for the non-zero mode, the decay rate is governed by the Biot-Savart law \eqref{def.Biot-Savart}. Taking this into account, we will build a solution $v_n$ of the non-zero mode of \eqref{eq.S} satisfying, for $2<\rho\le\min\{\gamma,3\}$, 
\begin{align}\label{cond.nonlin.1}
{\rm rot}\,v_n = (|x|^{-\rho}), \qquad v_n = O(|x|^{-\rho+1}), 
\end{align}
under suitable assumptions on the external force $f_n$.

For $|n|\ge1$, we define 
\begin{align}\label{def.zeta.etc.}
n_\gamma = \Big\{n^2 + \Big(\frac{\gamma}{2}\Big)^2\Big\}^{\frac12}, \qquad 
\zeta_n = (n_\gamma^2 + i\alpha n)^{\frac12}. 
\end{align}
This definition of $\zeta_n$ coincides with the one in \cite{HillairetWittwer13} if $\gamma=0$. We compute 
\begin{align}\label{modulus.re.im.zeta}
\begin{split}
|\zeta_n| 
&= 
n_\gamma \Big\{1 + \Big(\frac{\alpha n}{n_\gamma^2}\Big)^2 \Big\}^{\frac14}, \\
\Re(\zeta_n) 
&= 
\frac{n_\gamma}{\sqrt{2}} 
\bigg[\Big\{1 + \Big(\frac{\alpha n}{n_\gamma^2}\Big)^2\Big\}^{\frac12} + 1 \bigg]^{\frac12}, \\
\Im(\zeta_n) 
&= 
{\rm sgn}(\alpha n)
\frac{n_\gamma}{\sqrt{2}} 
\bigg[\Big\{1 + \Big(\frac{\alpha n}{n_\gamma^2}\Big)^2\Big\}^{\frac12} - 1 \bigg]^{\frac12}. 
\end{split}
\end{align}
Let us set $\xi_n=\Re(\zeta_n)$ for simplicity. Then we have 
\begin{align}\label{ineqs.xi}
\xi_n \le |\zeta_n| \le \sqrt{2} \xi_n, 
\qquad 
\frac{\xi_n}{|n|} 
\le 
(|\alpha|^{\frac12} + \gamma), 
\qquad 
0<\Big(\xi_n - \frac{\gamma}2\Big)^{-1}
< 2\gamma
\end{align}
with $C$ independent of $n$, $\alpha$ and $\gamma$.

%
\begin{proposition}\label{prop.est1.linear.nonzero}
For $|n|\ge1$, $\alpha\in\R$, $\gamma>2$, $2<\rho\le3$ with $2<\rho\le\min\{\gamma,3\}$ and $f=f_n\in \mathcal{P}_n L^\infty_{2\rho-1}(\Omega)^2$, there is a unique solution $(v_n,\nabla\mathcal{P}_n q)$ of \eqref{eq.S} with $v_n\in \mathcal{P}_n L^2_\sigma(\Omega) \cap W^{1,2}_0(\Omega)^2 \cap W^{2,2}_{{\rm loc}}(\overline{\Omega})^2$ and $\mathcal{P}_n q \in \mathcal{P}_n W^{1,2}_{{\rm loc}}(\overline{\Omega})$ satisfying the following. 
\begin{enumerate}[(1)]
\item For $|n|\ge1$,  
\begin{align}\label{est1.prop.est1.linear.nonzero}
r^{\rho} |\omega_n(r)|
\le 
\frac{C}{|n|}
\Big(\xi_n+\frac{\gamma}2\Big) 
\Big(\xi_n-\frac{\gamma}2\Big)^{-1} 
\|f_n\|_{L^\infty_{2\rho-1}}.
\end{align}

\item If $2<\rho\le\gamma<3$ or $2<\rho<3\le\gamma$, for $|n|\ge1$,
\begin{align}\label{est2.prop.est1.linear.nonzero}
\begin{split}
& r^{\rho-1} |v_n(r,\theta)| 
+ \frac{r^{\rho}}{|n|} |\nabla v_n(r,\theta)| \\
&\le 
\frac{C}{|n|(|n|-\rho+2)} 
\Big(\xi_n+\frac{\gamma}2\Big) 
\Big(\xi_n-\frac{\gamma}2\Big)^{-1} 
\|f_n\|_{L^\infty_{2\rho-1}}.
\end{split}
\end{align}

\item If $\gamma\ge3$ and $\rho=3$, for $|n|=1$,
\begin{align}\label{est3.prop.est1.linear.nonzero}
\begin{split}
&r^2 (\log r)^{-1} |v_n(r,\theta)|
+ r^3 (\log r)^{-1} |\nabla v_n(r,\theta)| \\
&\le 
C 
\Big(\xi_n+\frac{\gamma}2\Big) 
\Big(\xi_n-\frac{\gamma}2\Big)^{-1} 
\|f_n\|_{L^\infty_{5}}, 
\end{split}
\end{align}
and for $|n|>1$, 
\begin{align}\label{est4.prop.est1.linear.nonzero}
\begin{split}
r^{2} |v_n(r,\theta)|
+ \frac{r^{3}}{|n|} |\nabla v_n(r,\theta)| 
\le 
\frac{C}{|n|^2} 
\Big(\xi_n+\frac{\gamma}2\Big) 
\Big(\xi_n-\frac{\gamma}2\Big)^{-1} 
\|f_n\|_{L^\infty_{5}}.
\end{split}
\end{align}
\end{enumerate}
Here $\omega_n:=({\rm rot}\,v_n)_n$. The constant $C$ is independent of $n$, $\alpha$, $\gamma$ and $\rho$.
\end{proposition}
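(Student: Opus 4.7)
The plan is to eliminate the pressure by taking the scalar rot of the first line of \eqref{eq.S}, reduce the problem to a scalar Euler-type ODE for the vorticity mode $\omega_n$, solve it in a one-parameter family of decaying solutions, pin down the free constant via the Biot--Savart compatibility $d_n[\omega_n]=0$ of Proposition \ref{prop.biot-savart}, and then read off the pointwise estimates for $v_n$ from the streamfunction formulas \eqref{def.psi_n}--\eqref{def.Biot-Savart}. Using $(\alpha U-\gamma W)^{\bot}=-\alpha W-\gamma U$, ${\rm rot}\,U={\rm rot}\,W=0$ and ${\rm rot}(hF)=h\,{\rm rot}\,F+F\cdot(\nabla h)^{\bot}$, one finds that $\omega:={\rm rot}\,v$ solves $-\Delta\omega-\frac{\gamma}{r}\partial_r\omega+\frac{\alpha}{r^2}\partial_\theta\omega={\rm rot}\,f$, whose $n$-th Fourier mode is the Euler equation
\begin{equation*}
-\omega_n''-\frac{1+\gamma}{r}\omega_n'+\frac{n^2+i\alpha n}{r^2}\omega_n=g_n,\qquad r>1,
\end{equation*}
with $g_ne^{in\theta}=\mathcal{P}_n({\rm rot}\,f)$. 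The indicial exponents are precisely $-\gamma/2\pm\zeta_n$ (matching \eqref{def.zeta.etc.}), so the homogeneous solutions are $\phi_\pm(r)=r^{-\gamma/2\pm\zeta_n}$, only $\phi_-$ decays, and the Wronskian is $\phi_-\phi_+'-\phi_-'\phi_+=2\zeta_n r^{-\gamma-1}$.

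Variation of parameters produces the one-parameter family
\begin{equation*}
\omega_n(r)=\frac{\phi_-(r)}{2\zeta_n}\int_1^r s^{\gamma+1}\phi_+(s)g_n(s)\dd s+\frac{\phi_+(r)}{2\zeta_n}\int_r^\infty s^{\gamma+1}\phi_-(s)g_n(s)\dd s+c\,\phi_-(r),
\end{equation*}
from which the constant $c$ is fixed by the condition $d_n[\omega_n]=0$: by Proposition \ref{prop.biot-savart}, this is exactly what ensures $v_n:=V_n[\omega_n]$ vanishes on $\partial\Omega$ (the radial no-slip condition being automatic from \eqref{properties.Biot-Savart}), and once it holds $v_n$ solves the $n$-mode of \eqref{eq.S} by construction. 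Because $f\in L^\infty_{2\rho-1}$ need not be differentiable, I rewrite the representation by a single integration by parts in $s$, shifting the derivative hidden in $g_n$ onto the explicit kernels $s^{\gamma+1}\phi_\pm(s)$; the resulting boundary term at $s=1$ is absorbed into the redefinition of $c$. Computing $\int_1^\infty s^{-|n|+1}\phi_-(s)\dd s=(|n|+\gamma/2+\zeta_n-2)^{-1}$ then gives $c$ in closed form.

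With the representation in this form, the bound \eqref{est1.prop.est1.linear.nonzero} follows from estimating the two integrals via $|\phi_\pm(s)|=s^{-\gamma/2\pm\xi_n}$ and $|f_n(s)|\le s^{-2\rho+1}\|f_n\|_{L^\infty_{2\rho-1}}$: the inner integrand is of order $s^{\gamma/2+\xi_n-2\rho}$ and the outer one of order $s^{\gamma/2-\xi_n-2\rho}$, so the denominators coming from the primitives are respectively proportional to $\xi_n+\gamma/2-2\rho+2$ and $\xi_n-\gamma/2+2\rho-2$; bounding the latter in terms of $\xi_n-\gamma/2$ via \eqref{ineqs.xi} and combining with $|2\zeta_n|^{-1}\le|n|^{-1}$ produces \eqref{est1.prop.est1.linear.nonzero}. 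With $|\omega_n(r)|\le Cr^{-\rho}(\xi_n+\gamma/2)(\xi_n-\gamma/2)^{-1}/|n|$ in hand, insert this pointwise bound into the Biot--Savart formulas \eqref{def.psi_n}--\eqref{def.Biot-Savart}: the velocity estimates reduce to the elementary integrals $r^{-|n|}\int_1^r s^{|n|+1-\rho}\dd s$ and $r^{|n|}\int_r^\infty s^{-|n|+1-\rho}\dd s$, which in the non-borderline cases both decay at rate $r^{-\rho+2}$ with coefficient $(|n|-\rho+2)^{-1}$, yielding \eqref{est2.prop.est1.linear.nonzero}. At $|n|=1$ and $\rho=3$ the inner integrand becomes $s^{-1}$ and produces the $\log r$ factor of \eqref{est3.prop.est1.linear.nonzero}; for $|n|\ge2$ the integrals stay non-resonant and give \eqref{est4.prop.est1.linear.nonzero}. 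Gradient bounds follow by differentiating under the integral, the pressure $\nabla\mathcal{P}_nq$ is recovered from \eqref{eq.polar.vr}--\eqref{eq.polar.vtheta}, and uniqueness follows because the difference of two solutions has $\omega_n$ a constant multiple of $\phi_-$ with $d_n[\omega_n]=0$, forcing that constant to vanish.

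The principal technical challenges will be (i) carrying out the integration by parts in a way that preserves the sharp dependence on $\xi_n$, $\gamma$ and $|n|$ in the final constants, in particular to recover the precise factor $(\xi_n-\gamma/2)^{-1}$ from the outer-integral denominator, and (ii) handling the borderline case $|n|=1$, $\rho=3$, $\gamma\ge3$, where the Biot--Savart integral becomes logarithmic and the bookkeeping of the $\log r$ factor through both $v_n$ and $\nabla v_n$ must be done carefully.
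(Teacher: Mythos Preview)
Your proposal is correct and follows essentially the same route as the paper: take ${\rm rot}$ to obtain the Euler ODE for $\omega_n$ with indicial roots $-\gamma/2\pm\zeta_n$, solve by variation of parameters, integrate by parts to remove the derivative from $f_n$, fix the free constant by $d_n[\omega_n]=0$, recover $v_n$ via the Biot--Savart law \eqref{def.psi_n}--\eqref{def.Biot-Savart}, and deduce uniqueness from $d_n[\tilde c_n\phi_-]=0\Rightarrow\tilde c_n=0$. The paper differs only in presentation: it first argues for $f_n\in\mathcal{P}_nC^\infty_0(\Omega)^2$ and invokes Proposition~\ref{prop.rot.zero} (rather than \eqref{eq.polar.vr}--\eqref{eq.polar.vtheta} directly) to produce the pressure, then passes to general $f_n\in\mathcal{P}_nL^\infty_{2\rho-1}$ by an $L^2$-weighted approximation (Lemma~\ref{lem.app.est.int.2}) to verify the weak formulation; you should make this last step explicit, since ``by construction'' alone does not show that your $v_n$ actually solves \eqref{eq.S} with a bona fide pressure when $f_n$ is merely $L^\infty_{2\rho-1}$.
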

%
%
\begin{proof}
The proof is divided into two parts. First we show the existence of solutions using the representation formula. Second we verify the uniqueness by Proposition \ref{prop.biot-savart}.

({\bf Existence}) Initially, let us assume that $f_n\in \mathcal{P}_n C^\infty_0(\Omega)^2$. Operating ${\rm rot}\,$ and $\mathcal{P}_n$ to the first line of \eqref{eq.S}, we see that $\omega_n=\omega_n(r)$ satisfies the ordinary differential equation 
\begin{align}\label{ode1.vor.nFourier}
-\frac{\dd^2 \omega_n}{\dd r^2} 
- \frac{1+\gamma}{r} \frac{\dd \omega_n}{\dd r} 
+ \frac{n^2+i\alpha n}{r^2} \omega_n 
= ({\rm rot}\,f_n)_n, \quad r>1. 
\end{align}
By the transformation
\begin{align*}
\omega_n(r) = r^{-\frac{\gamma}2} \tilde{\omega}_n(r), 
\end{align*}
we find that $\tilde{\omega}_n$ solves 
\begin{align}\label{ode2.vor.nFourier}
-\frac{\dd^2 \tilde{\omega}_n}{\dd r^2} 
- \frac{1}{r} \frac{\dd \tilde{\omega}_n}{\dd r} 
+ \frac{\zeta_n^2}{r^2} \tilde{\omega}_n
= r^{\frac{\gamma}2} ({\rm rot}\,f_n)_n, \quad r>1. 
\end{align}
The linearly independent solutions of the homogeneous equation of \eqref{ode2.vor.nFourier} are
\begin{align*}
r^{-\zeta_n} \quad \text{and} \quad  r^{\zeta_n},
\end{align*}
and their Wronskian is $2\zeta_n r^{-1}$. Hence the decaying solution of \eqref{ode2.vor.nFourier} is given by
\begin{align}\label{rep1.vol.nFourier}
\tilde{\omega}_n(r)
&= 
c_n[f_n] r^{-\zeta_n} + \Psi_{n}[f_n](r), 
\end{align}
where the constant $c_n[f_n]$ is to be determined later and $\Psi_{n}[f_n]$ is defined by 
\begin{align}\label{rep2.vol.nFourier}
\begin{split}
\Psi_{n}[f_n](r)
& = 
\frac{r^{-\zeta_n}}{\zeta_n} 
\int_1^r s^{\zeta_n + \frac{\gamma}{2} + 1} ({\rm rot}\,f_n)_n(s) \dd s \\
&\quad
+ \frac{r^{\zeta_n}}{\zeta_n}
\int_r^\infty s^{-\zeta_n + \frac{\gamma}{2} + 1} ({\rm rot}\,f_n)_n(s) \dd s \\
& =
-\frac{r^{-\zeta_n}}{\zeta_n} 
\int_1^r s^{\zeta_n + \frac{\gamma}2} 
\Big\{\Big(\zeta_n + \frac{\gamma}2\Big) f_{\theta,n}(s) + inf_{r,n}(s)\Big\} \dd s \\
&\quad
+ \frac{r^{\zeta_n}}{\zeta_n}
\int_r^\infty s^{-\zeta_n + \frac{\gamma}2} 
\Big\{\Big(\zeta_n - \frac{\gamma}2\Big) f_{\theta,n}(s) - inf_{r,n}(s)\Big\} \dd s. 
\end{split}
\end{align}
Here we performed integration by parts using $({\rm rot}\,f_n)_n(r) = \frac1r \frac{\dd}{\dd r} (r f_{\theta,n})(r) - \frac{i n}{r} f_{r,n}(r)$. Going back to the equation \eqref{ode1.vor.nFourier}, we see that the decaying solution is given by 
\begin{align}\label{rep3.vol.nFourier}
\omega_n(r) = c_n[f_n] r^{-\zeta_n-\frac{\gamma}2} + \Phi_{n}[f_n](r), \qquad 
\Phi_{n}[f_n](r):= r^{-\frac{\gamma}2} \Psi_{n}[f_n](r). 
\end{align}

Let us determine $c_n[f_n]$ in \eqref{rep1.vol.nFourier}. From $f_n\in \mathcal{P}_n C^\infty_0(\Omega)^2$, we have $|\Phi_{n}[f_n](r)|\le C(f,n) r^{-\xi_n-\frac{\gamma}2}$. We choose $c_n[f_n]$ so that $d_n [\omega_n]$ in \eqref{def.psi_n} is zero, namely, 
\begin{align}\label{rep4.vol.nFourier}
\begin{split}
c_n[f_n]
= -\Big(\zeta_n + |n| + \frac{\gamma}2 - 2\Big) 
\int_1^\infty s^{-|n|+1} \Phi_{n}[f_n](s) \dd s.  
\end{split}
\end{align}
Then the Biot-Savart law $V_n [\omega_n]=:v_n$ in \eqref{def.Biot-Savart} is written as 
\begin{align}\label{rep1.vel.nFourier}
\begin{split}
v_n(r,\theta)
&=v_{r,n}(r) e^{in\theta} {\bf e}_r + v_{r,n}(r) e^{in\theta} {\bf e}_\theta, \\
v_{r,n}(r)
&=\frac{in}{2|n|}
\Big(r^{-|n|-1} \int_{1}^{r} s^{|n|+1} \omega_n(s)\dd s + r^{|n|-1} \int_{r}^{\infty} s^{-|n|+1} \omega_n(s)\dd s
\Big), \\
v_{\theta,n}(r)
&=\frac{1}{2}
\Big(r^{-|n|-1} \int_{1}^{r} s^{|n|+1} \omega_n(s)\dd s - r^{|n|-1} \int_{r}^{\infty} s^{-|n|+1} \omega_n(s)\dd s
\Big). 
\end{split}
\end{align}

Let us show that, for $v_n$ in \eqref{rep1.vel.nFourier}, there is a pressure $\mathcal{P}_n q\in \mathcal{P}_n\widehat{W}^{1,2}(\Omega)$ such that the pair $(v_n,\nabla\mathcal{P}_n q)$ is a solution of \eqref{eq.S}. From $f_n\in \mathcal{P}_n C^\infty_0(\Omega)^2$ and $\xi_n > \frac{\gamma}2$ implied by \eqref{ineqs.xi}, we see that $\omega_n$ in \eqref{rep3.vol.nFourier} is smooth and satisfies $|\nabla^k \omega_n(r)|\le C(f,n,k) r^{-\min\{\gamma,3\}}$ for any $k\in\Z_{\ge0}$. This, combined with the choice of $c_n[f_n]$ in \eqref{rep4.vol.nFourier} and Lemma \ref{lem.app.est.int.1}, ensures that $v_n$ is smooth and satisfies $v_n|_{\partial\Omega}=0$ and $\|\nabla^k v_n\|_{L^2}<\infty$ for any $k\in\Z_{\ge0}$. Accordingly, we can apply Proposition \ref{prop.rot.zero} because of $({\rm rot}\,v_n)_n=\omega_n$ and 
\begin{align*}
\begin{split}
&\big(
{\rm rot}\,(-\Delta v_n + (\alpha U - \gamma W)^\bot {\rm rot}\,v_n - f_n)
\big)_n \\
&=-\frac{\dd^2 \omega_n}{\dd r^2} 
- \frac{1+\gamma}{r} \frac{\dd \omega_n}{\dd r} 
+ \frac{n^2+i\alpha n}{r^2} \omega_n 
- ({\rm rot}\,f_n)_n
=0. 
\end{split}
\end{align*}
Hence there is a pressure $\mathcal{P}_n q\in\mathcal{P}_n\widehat{W}^{1,2}(\Omega)$ such that $(v_n,\nabla\mathcal{P}_n q)$ is a solution of \eqref{eq.S}.

Now, let $f_n\in \mathcal{P}_n L^\infty_{2\rho-1}(\Omega)^2$ in \eqref{rep2.vol.nFourier}--\eqref{rep1.vel.nFourier}. We will prove the estimates \eqref{est1.prop.est1.linear.nonzero}--\eqref{est4.prop.est1.linear.nonzero}. For $\Phi_{n}[f_n]$ in \eqref{rep3.vol.nFourier} and $c_n[f_n]$ in \eqref{rep4.vol.nFourier}, using Lemma \ref{lem.app.est.int.1}, we have 
\begin{align*}
\begin{split}
\|\Phi_{n}[f_n]\|_{L^\infty_\rho}
+ |c_n[f_n]|
&\le 
C\bigg(
1 + \frac{|\zeta_n|+|n|+\dfrac{\gamma}2-2}{|n|+\rho-2} \bigg)
\|\Phi_{n}[f_n]\|_{L^\infty_\rho} \\
&\le 
\frac{C}{|n|}
\Big(\xi_n+\frac{\gamma}2\Big) 
\Big(\xi_n-\frac{\gamma}2\Big)^{-1} 
\|f_n\|_{L^\infty_{2\rho-1}}, 
\end{split}
\end{align*}
where \eqref{ineqs.xi} is used in the second inequality and $C$ is independent of $n$, $\alpha$, $\gamma$ and $\rho$. Hence we see from \eqref{rep3.vol.nFourier} that $\omega_n$ satisfies \eqref{est1.prop.est1.linear.nonzero}. For $v_n$ in \eqref{rep1.vel.nFourier}, using \eqref{formulas.polar} and  \eqref{est1.prop.est1.linear.nonzero}, we have
\begin{align*}
\begin{split}
&|v_n(r,\theta)| 
+ \frac{r}{|n|} |\nabla v_n(r,\theta)| \\
&\le 
\frac{C}{|n|} 
\Big(\xi_n+\frac{\gamma}2\Big) 
\Big(\xi_n-\frac{\gamma}2\Big)^{-1} 
\|f_n\|_{L^\infty_{2\rho-1}} \\
&\qquad
\times\bigg(r^{-|n|-1} \int_{1}^{r} s^{|n|-\rho+1} \dd s 
+ r^{|n|-1} \int_{r}^{\infty} s^{-|n|-\rho+1} \dd s
\bigg). 
\end{split}
\end{align*}
Hence we see from Lemma \ref{lem.app.est.int.1} (\ref{item2.lem.app.est.int.1}) that $v_n$ satisfies \eqref{est2.prop.est1.linear.nonzero}--\eqref{est4.prop.est1.linear.nonzero}.

It remains to verify that $v_n$ given by \eqref{rep1.vel.nFourier} is a solution of \eqref{eq.S} for general $f_n\in \mathcal{P}_n L^\infty_{2\rho-1}(\Omega)^2$. The condition $v_n\in W^{1,2}_0(\Omega)^2$ follows from \eqref{est2.prop.est1.linear.nonzero}--\eqref{est4.prop.est1.linear.nonzero} and the choice of $c_n[f_n]$ in \eqref{rep4.vol.nFourier}. By Lemma \ref{lem.app.est.int.2} and direct computation, one has 
\begin{align}\label{est1.proof.prop.est1.linear.nonzero}
\begin{split}
& \|\Phi_{n}[f_n]\|_{L^\infty_2} 
+ |c_n[f_n]| 
+ \|\omega_n\|_{L^\infty_2} 
+ \|\nabla v_n\|_{L^2} \\
&\le 
C(n,\alpha,\gamma) 
\|\mu f_n\|_{L^2}, 
\qquad \mu(x):=|x|^2. 
\end{split}
\end{align}
To show that $v_n$ satisfies the weak formulation of \eqref{eq.S}, we take $\{\psi_{n}^{(m)}\}_{m=1}^\infty\subset \mathcal{P}_nC^\infty_0(\Omega)^2$ such that 
$\displaystyle{\lim_{m\to\infty}\|\mu(f_n-\psi_{n}^{(m)})\|_{L^2}}=0$. Let $v_{n}^{(m)}$ denote the smooth solution given by \eqref{rep1.vel.nFourier} replacing $f_n$ by $\psi_{n}^{(m)}$. Also, let $\mathcal{P}_n q_{n}^{(m)}\in \mathcal{P}_n \widehat{W}^{1,2}(\Omega)$ be an associated pressure. Then, using \eqref{est1.proof.prop.est1.linear.nonzero} and linearity of the equations, we have, for any $\varphi\in C^\infty_{0,\sigma}(\Omega)$, 
\begin{align*}
\begin{split}
&\langle\nabla v_n, \nabla \varphi \rangle
+ \langle (\alpha U - \gamma W)^\bot{\rm rot}\,v_n - f_n, \varphi \rangle \\
&= \lim_{m\to\infty} 
\langle -\Delta v_{n}^{(m)} + (\alpha U - \gamma W)^\bot{\rm rot}\,v_{n}^{(m)} - \psi_{n}^{(m)}, \varphi \rangle \\
&= \lim_{m\to\infty} 
\langle \nabla \mathcal{P}_n q_{n}^{(m)}, \varphi \rangle = 0. 
\end{split}
\end{align*}
Here we performed integration by parts in the first and last equalities. Consequently, we see that $v_n$ is a weak solution of \eqref{eq.S}. Then, by regarding $(\alpha U - \gamma W)^\bot{\rm rot}\,v_n - f_n$ as the external force, one can prove the local regularity $v_n\in W^{2,2}_{{\rm loc}}(\overline{\Omega})^2$ and the existence of an associated pressure $\mathcal{P}_n q \in \mathcal{P}_n W^{1,2}_{{\rm loc}}(\overline{\Omega})$ by standard theory for the Stokes system; see Sohr \cite[Chapter I\hspace{-.1em}I\hspace{-.1em}I]{Sohr2013} for example. This completes the existence part of the proof.

({\bf Uniqueness}) Assume that $(v_n,\nabla\mathcal{P}_n q)\in \big(\mathcal{P}_n L^2_\sigma(\Omega) \cap W^{1,2}_{0}(\Omega)^2 \cap W^{2,2}_{{\rm loc}}(\overline{\Omega})^2\big)\times \mathcal{P}_n L^{2}_{{\rm loc}}(\overline{\Omega})^2$ is a solution of \eqref{eq.S} with $f=0$. By the elliptic regularity, $v_n$ is smooth in $\Omega$. Since $\omega_n=({\rm rot}\,v_n)_n$ satisfies the homogeneous equation of \eqref{ode1.vor.nFourier}, due to the summability $\nabla v_n\in L^2(\Omega)^{2\times2}$, we have $\omega_n=\tilde{c}_n r^{-\zeta_n-\frac{\gamma}2}$ with some constant $\tilde{c}_n$. Then Proposition \ref{prop.biot-savart} implies that $v_n=V_n[\omega_n]$ and $d_n [\omega_n]=0$. The latter condition is rewritten as 
\begin{align*}
0
=d_n[\omega_n]
=\tilde{c}_n \int_1^\infty s^{-\zeta_n-|n|-\frac{\gamma}2+1} \dd s
=\frac{\tilde{c}_n}{\zeta_n+|n|+\dfrac{\gamma}2-2}.
\end{align*}
Thus we have $\tilde{c}_n=0$ and hence $\omega_n=0$. Then the uniqueness follows from $v_n=V_n[\omega_n]=0$. This completes the uniqueness part of the proof. Hence we conclude. 
\end{proof}
%

\section{Nonlinear problem}\label{sec.nonlin.}

In this section we prove Theorem \ref{thm.main} by showing the unique solvability of the system 
\begin{equation}\tag{$\widetilde{\mbox{NS}}$}\label{eq.NS.tilde}
\left\{
\begin{array}{ll}
-\Delta v + (\alpha U - \gamma W)^\bot {\rm rot}\,v + \nabla q 
= - v^{\bot} {\rm rot}\,v + f &\mbox{in}\ \Omega \\
{\rm div}\,v =0&\mbox{in}\ \Omega \\
v=0&\mbox{on}\ \partial\Omega \\
v(x)\to0&\mbox{as}\ |x|\to\infty. 
\end{array}\right.
\end{equation}
Let us collect two lemmas needed in the proof. For $\rho\ge0$, we define the Banach space
\begin{align*}
l^1\big(L^\infty_{\rho}(\Omega)\big)
&= \bigg\{f=\sum_{n\in\Z}\mathcal{P}_n f~\bigg|~ 
\|f\|_{l^1L^\infty_\rho}:=\sum_{n\in\Z} \|\mathcal{P}_n f\|_{L^\infty_\rho}<\infty\bigg\}.
\end{align*}
%
%
\begin{lemma}\label{lem.est.l1}
For $\alpha\in\R$, $\gamma>2$, $2<\rho<3$ with $2<\rho\le\min\{\gamma,3\}$ and $f\in l^1\big(L^\infty_{2\rho-1}(\Omega)\big)^2$, there is a unique solution $(v,\nabla q)$ of \eqref{eq.S} with $v\in L^2_\sigma(\Omega) \cap W^{1,2}_0(\Omega)^2 \cap W^{2,2}_{{\rm loc}}(\overline{\Omega})^2$ and $q \in W^{1,2}_{{\rm loc}}(\overline{\Omega})$ satisfying
\begin{align}\label{est1.cor.est.l1}
\begin{split}
\|v\|_{l^1 L^\infty_{\rho-1}} + \|\nabla v\|_{l^1 L^\infty_{\rho}} 
\le 
\kappa \|f\|_{l^1 L^\infty_{2\rho-1}}, 
\qquad 
\kappa=\kappa(\alpha,\gamma,\rho):=\frac{C_0(|\alpha|^\frac12+\gamma) \gamma}{(\rho-2)^2(3-\rho)}. 
\end{split}
\end{align}
The constant $C_0$ is independent of $\alpha$, $\gamma$ and $\rho$.
\end{lemma}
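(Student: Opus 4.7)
The plan is to decompose $f=\sum_{n\in\Z} \mathcal{P}_n f$ and apply the modal existence/uniqueness results of Propositions~\ref{prop.est.linear.zero} (zero mode) and \ref{prop.est1.linear.nonzero}(2) (non-zero modes) separately; the range $2<\rho<3$ with $\rho\le\min\{\gamma,3\}$ always falls into case~(2) of Proposition~\ref{prop.est1.linear.nonzero}, so for each $n$ I obtain a unique pair $(v_n,\nabla \mathcal{P}_n q)$ solving \eqref{eq.S} with data $f_n=\mathcal{P}_n f$. The $l^1(L^\infty)$ estimate is then produced by summing the modal bounds while carefully extracting the correct $\alpha,\gamma,\rho$ dependence.

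The zero mode contributes, by Proposition~\ref{prop.est.linear.zero} and the inequality $\gamma-2\ge\rho-2$ implied by $\rho\le\gamma$,
\[
\|v_0\|_{L^\infty_{\rho-1}}+\|\nabla v_0\|_{L^\infty_\rho}\le \frac{C\gamma}{(\gamma-2)(\rho-2)}\|f_0\|_{L^\infty_{2\rho-1}}\le \frac{C\gamma}{(\rho-2)^2}\|f_0\|_{L^\infty_{2\rho-1}}.
\]
For $|n|\ge1$, the coefficient in Proposition~\ref{prop.est1.linear.nonzero}(2) is simplified using \eqref{ineqs.xi}: from $\xi_n/|n|\le |\alpha|^{\frac12}+\gamma$ and $(\xi_n-\gamma/2)^{-1}<2\gamma$, together with $\gamma/2\le|n|\gamma$, I get $(\xi_n+\gamma/2)(\xi_n-\gamma/2)^{-1}\le C|n|(|\alpha|^{\frac12}+\gamma)\gamma$. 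Combining this with the elementary bound $|n|/(|n|-\rho+2)\le 1/(3-\rho)$, valid for all $|n|\ge1$ and $2<\rho<3$, yields
\[
\|v_n\|_{L^\infty_{\rho-1}}+\|\nabla v_n\|_{L^\infty_\rho}\le \frac{C(|\alpha|^{\frac12}+\gamma)\gamma}{3-\rho}\|f_n\|_{L^\infty_{2\rho-1}},\qquad |n|\ge1.
\]
Summing over $n\in\Z$ and absorbing both contributions into a single constant (using $(\rho-2)^2,(3-\rho)<1$ and $(|\alpha|^{\frac12}+\gamma)\gamma>4$) delivers the claimed bound with $\kappa$ as in the statement.

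With this estimate at hand, $v:=\sum_n v_n$ converges absolutely in $L^\infty_{\rho-1}(\Omega)$; the decay $\rho>2$ then places $v$ in $L^2_\sigma(\Omega)\cap W^{1,2}_0(\Omega)^2$. Passing the modal weak formulations of \eqref{eq.S} to the limit shows that $(v,\nabla q):=\sum_n (v_n,\nabla\mathcal{P}_n q)$ is a weak solution, and the local regularity $v\in W^{2,2}_{\rm loc}(\overline\Omega)^2$, $q\in W^{1,2}_{\rm loc}(\overline\Omega)$ follows from standard Stokes theory. Uniqueness reduces to applying $\mathcal{P}_n$ to the difference of two solutions and invoking the mode-wise uniqueness. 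The main technical point is the bookkeeping in the summation: one must simultaneously absorb the zero-mode singularity $(\rho-2)^{-2}$ (arising through the Wronskian $(\gamma-2)r^{-\gamma-1}$ of \eqref{ode.vel.0Fourier} via $\gamma-2\ge\rho-2$) and the $|n|=1$ singularity $(3-\rho)^{-1}$ (arising from the factor $|n|-\rho+2$ in Proposition~\ref{prop.est1.linear.nonzero}(2)) into a single $\kappa$, while checking that the $\alpha,\gamma$-dependence in the non-zero modes reduces cleanly to $(|\alpha|^{\frac12}+\gamma)\gamma$.
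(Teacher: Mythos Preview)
Your proposal is correct and follows essentially the same route as the paper: decompose $f$ into Fourier modes, apply Propositions~\ref{prop.est.linear.zero} and~\ref{prop.est1.linear.nonzero}(2), simplify the modal constants via \eqref{ineqs.xi}, sum, and then recover regularity and uniqueness mode-wise. Your write-up is in fact more explicit than the paper's, which compresses the whole argument into the single inequality $\frac{1}{|n|}(\xi_n+\gamma/2)(\xi_n-\gamma/2)^{-1}\le 4(|\alpha|^{1/2}+\gamma)\gamma$ and leaves the handling of the factors $(|n|-\rho+2)^{-1}$ and $(\gamma-2)^{-1}$ implicit.
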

%
%
\begin{proof}
This follows from Propositions \ref{prop.est.linear.zero} and \ref{prop.est1.linear.nonzero} and an estimate obtained from \eqref{ineqs.xi}: 
\begin{align*}
\begin{split}
\frac{1}{|n|}
\Big(\xi_n+\frac{\gamma}2\Big) 
\Big(\xi_n-\frac{\gamma}2\Big)^{-1} 
\le 4(|\alpha|^\frac12+\gamma) \gamma. 
\end{split}
\end{align*}
Indeed, we see from the propositions that there is a weak solution $v\in L^2_\sigma(\Omega) \cap W^{1,2}_0(\Omega)^2$ of \eqref{eq.S} satisfying \eqref{est1.cor.est.l1}. Then, as in the proof of Proposition \ref{prop.est1.linear.nonzero}, we have the local regularity $v\in W^{2,2}_{{\rm loc}}(\overline{\Omega})^2$ and an associate pressure $q \in W^{1,2}_{{\rm loc}}(\overline{\Omega})$. The uniqueness of $(v,\nabla q)$ follows from that of $(v_n,\nabla \mathcal{P}_n q)$ for $n\in\Z$. This completes the proof. 
\end{proof}
%

%
\begin{lemma}\label{lem.est.nonlinear}
For $v\in l^1\big(L^\infty_{\gamma_1}(\Omega)\big)^2$ and $\omega\in l^1\big(L^\infty_{\gamma_2}(\Omega)\big)$, we have
\begin{align*}
\begin{split}
\|v \omega\|_{l^1L^\infty_{\gamma_1+\gamma_2}}
\le
\|v\|_{l^1L^\infty_{\gamma_1}} 
\|\omega\|_{l^1L^\infty_{\gamma_2}}. 
\end{split}
\end{align*}
\end{lemma}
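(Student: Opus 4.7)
The plan is to exploit the fact that multiplication of functions corresponds to convolution of their Fourier modes, and that the weighted $L^\infty$ norm is submultiplicative with respect to the weight exponent. First, I would decompose $v = \sum_{m\in\Z} \mathcal{P}_m v$ and $\omega = \sum_{k\in\Z} \mathcal{P}_k \omega$, both convergent in their respective spaces. Using the definitions \eqref{def.P_n} and \eqref{def.omega_n}, a direct computation shows
\begin{align*}
(\mathcal{P}_m v)(\mathcal{P}_k \omega)(r,\theta)
=\bigl(v_{r,m}(r)\omega_k(r){\bf e}_r + v_{\theta,m}(r)\omega_k(r){\bf e}_\theta\bigr)e^{i(m+k)\theta},
\end{align*}
which lies in the Fourier mode $m+k$. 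Hence, at least formally,
\begin{align*}
\mathcal{P}_n(v\omega) = \sum_{m+k=n}(\mathcal{P}_m v)(\mathcal{P}_k \omega), \qquad n\in\Z.
\end{align*}

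Next, the pointwise bound $|x|^{\gamma_1+\gamma_2}|(\mathcal{P}_m v)(\mathcal{P}_k \omega)| \le \bigl(|x|^{\gamma_1}|\mathcal{P}_m v|\bigr)\bigl(|x|^{\gamma_2}|\mathcal{P}_k \omega|\bigr)$ immediately yields
\begin{align*}
\|(\mathcal{P}_m v)(\mathcal{P}_k \omega)\|_{L^\infty_{\gamma_1+\gamma_2}}
\le \|\mathcal{P}_m v\|_{L^\infty_{\gamma_1}} \|\mathcal{P}_k \omega\|_{L^\infty_{\gamma_2}}.
\end{align*}
Applying the triangle inequality to the mode decomposition above and then summing over $n\in\Z$ gives, by Fubini and a change of variables,
\begin{align*}
\sum_{n\in\Z}\|\mathcal{P}_n(v\omega)\|_{L^\infty_{\gamma_1+\gamma_2}}
\le \sum_{n\in\Z}\sum_{m+k=n} \|\mathcal{P}_m v\|_{L^\infty_{\gamma_1}} \|\mathcal{P}_k \omega\|_{L^\infty_{\gamma_2}}
= \|v\|_{l^1L^\infty_{\gamma_1}}\|\omega\|_{l^1L^\infty_{\gamma_2}},
\end{align*}
which is precisely the claimed Young-type inequality $\ell^1 * \ell^1 \hookrightarrow \ell^1$ on $\Z$.

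There is no real obstacle to overcome; the only mildly delicate point is the justification of the identity $\mathcal{P}_n(v\omega) = \sum_{m+k=n}(\mathcal{P}_m v)(\mathcal{P}_k \omega)$ when $v$ and $\omega$ are given only as elements of $l^1$-type spaces rather than as smooth functions. This is handled by the $l^1$-convergence of the partial Fourier sums of $v$ and $\omega$ in $L^\infty_{\gamma_1}$ and $L^\infty_{\gamma_2}$ respectively, which propagates via the submultiplicativity of the weighted $L^\infty$ norm to $l^1$-convergence of the partial sums of the product in $L^\infty_{\gamma_1+\gamma_2}$, allowing us to identify the Fourier modes termwise.
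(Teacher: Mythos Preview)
Your proposal is correct and follows essentially the same approach as the paper's proof: both rely on the convolution identity $\mathcal{P}_n(v\omega)=\sum_{m}(\mathcal{P}_m v)(\mathcal{P}_{n-m}\omega)$ together with the Young inequality for $\ell^1$-sequences and the submultiplicativity of the weighted $L^\infty$ norm. Your write-up is somewhat more detailed (in particular the remark justifying the convolution identity via $l^1$-convergence of partial sums), but the underlying argument is the same.
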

%
%
\begin{proof}
The identity 
\begin{align*}
\mathcal{P}_n (v \omega)
&=
\sum_{m\in\Z} 
(\mathcal{P}_m v) (\mathcal{P}_{n-m} \omega), \quad n\in\Z 
\end{align*}
and the Young inequality for sequences imply 
\begin{align*}
\|v \omega\|_{l^1L^\infty_{\gamma_1+\gamma_2}} 
&\le
\sum_{n\in\Z} \sum_{m\in\Z}
\|\mathcal{P}_m v\|_{L^\infty_{\gamma_1}} 
\|\mathcal{P}_{n-m} \omega\|_{L^\infty_{\gamma_2}} \\
&\le
\|v\|_{l^1L^\infty_{\gamma_1}}
\|\omega\|_{l^1L^\infty_{\gamma_2}}. 
\end{align*}
This completes the proof. 
\end{proof}
%

%
\begin{proofx}{Theorem \ref{thm.main}}
Firstly we show the unique solvability of \eqref{eq.NS.tilde} under smallness conditions on $f$. We define the Banach space 
\begin{align*}
\mathcal{X}_{\rho}
=\Big\{w\in L^2_\sigma(\Omega) \cap W^{1,2}_0(\Omega)^2 \cap l^1\big(L^\infty_{\rho-1}(\Omega)\big)^2 
~\Big|~ 
\nabla w \in l^1\big(L^\infty_{\rho}(\Omega)\big)^{2\times2} \Big\},  
\end{align*}
equipped with the norm $\|w\|_{\mathcal{X}_{\rho}}:=\|w\|_{l^1 L^\infty_{\rho-1}}+\|\nabla w\|_{l^1 L^\infty_{\rho}}$, and consider the closed subset 
\begin{align*}
\mathcal{B}_{\rho}(\delta) 
= \{w\in \mathcal{X}_{\rho} ~|~ \|w\|_{\mathcal{X}_{\rho}} \le \delta \}, 
\quad \delta>0. 
\end{align*}

For any $w\in \mathcal{X}_{\rho}$, by Lemma \ref{lem.est.l1}, there is a unique solution $(v_w,\nabla q_w)$ to the problem 
\begin{equation*}
\left\{
\begin{array}{ll}
-\Delta v_w + (\alpha U - \gamma W)^\bot {\rm rot}\,v_w + \nabla q_w = - w^{\bot} {\rm rot}\,w + f &\mbox{in}\ \Omega \\
{\rm div}\,v_w =0&\mbox{in}\ \Omega \\
v_w=0&\mbox{on}\ \partial\Omega \\
v_w(x)\to0&\mbox{as}\ |x|\to\infty 
\end{array}\right.
\end{equation*}
satisfying 
\begin{align}\label{est1.proof.thm.main}
\begin{split}
\|v_w\|_{\mathcal{X}_{\rho}}
=\|v_w\|_{l^1 L^\infty_{\rho-1}} 
+ \|\nabla v_w\|_{l^1 L^\infty_{\rho}} 
&\le 
\kappa
\big(\|w^{\bot} {\rm rot}\,w\|_{l^1 L^\infty_{2\rho-1}} + \|f\|_{l^1 L^\infty_{2\rho-1}}\big) \\
&\le 
\kappa
\big(\|w\|_{l^1 L^\infty_{\rho-1}} \|{\rm rot}\,w\|_{l^1 L^\infty_{\rho}} + \|f\|_{l^1 L^\infty_{2\rho-1}}\big), 
\end{split}
\end{align}
where Lemma \ref{lem.est.nonlinear} is applied in the second inequality. Hence, by denoting $v_w$ by $T(w)$, we see that $T$ defines a linear map from $\mathcal{X}_{\rho}$ to itself.

Let us show that $T$ is a contraction on $\mathcal{B}_{\rho}(\delta)\subset \mathcal{X}_{\rho}$ if both $\|f\|_{l^1 L^\infty_{2\rho-1}}$ and $\delta$ are sufficiently small depending on $\kappa=\kappa(\alpha,\gamma,\rho)$. Then the unique existence of solutions of \eqref{eq.NS.tilde} follows from the Banach fixed-point theorem. For any $w\in \mathcal{B}_{\rho}(\delta)$, by \eqref{est1.proof.thm.main}, we have 
\begin{align}\label{est2.proof.thm.main}
\begin{split}
\|T(w)\|_{\mathcal{X}_{\rho}}
&\le 
\kappa\big(\delta^2 + \|f\|_{l^1 L^\infty_{2\rho-1}}\big). 
\end{split}
\end{align}
On the other hand, for any $w_1,w_2\in\mathcal{B}_{\rho}(\delta)$, by Lemma \ref{lem.est.nonlinear} again, we have
\begin{align}\label{est3.proof.thm.main}
\begin{split}
\|T(w_2)-T(w_1)\|_{\mathcal{X}_{\rho}} 
&\le 
\kappa \|w_2^{\bot} {\rm rot}\,w_2 - w_1^{\bot} {\rm rot}\,w_1\|_{l^1 L^\infty_{2\rho-1}} \\
&\le 
2\kappa \delta \|w_2 - w_1\|_{\mathcal{X}_{\rho}}. 
\end{split}
\end{align}
Therefore, fixing $\delta$ and $\|f\|_{l^1 L^\infty_{2\rho-1}}$ so that 
\begin{align}\label{est4.proof.thm.main}
\begin{split}
\delta < \frac{1}{2\widetilde{\kappa}}
\quad \text{and} \quad 
\|f\|_{l^1 L^\infty_{2\rho-1}} \le \delta^2, 
\end{split}
\end{align}
we see that $T:\mathcal{B}_{\rho}(\delta)\to\mathcal{B}_{\rho}(\delta)$ is contractive. Then the Banach fixed-point theorem yields that there is a unique element $v\in\mathcal{B}_{\rho}(\delta)$ such that $T(v)=v$. Thus we obtain a solution $(v,\nabla q)\in \big(L^2_\sigma(\Omega) \cap W^{1,2}_0(\Omega)^2 \cap W^{2,2}_{{\rm loc}}(\overline{\Omega})^2\big)\times L^2_{{\rm loc}}(\overline{\Omega})^2$ of \eqref{eq.NS.tilde} with $v$ unique in $\mathcal{B}_{\rho}(\delta)$.

By the argument so far, for a given $f\in l^1\big(L^\infty_{2\rho-1}(\Omega)\big)^2$ satisfying \eqref {est4.proof.thm.main}, the pair  
\begin{align*}
u := \alpha U - \gamma W + v, \qquad
\nabla p := \nabla\Big(-\frac{|u|^2}2 + q\Big)
\end{align*}
is a solution of \eqref{eq.NS.intro} in $\big(\widehat{W}^{1,2}(\Omega) \cap W^{2,2}_{{\rm loc}}(\overline{\Omega}) \cap L^\infty_{1} (\Omega)\big)^2 \times L^2_{{\rm loc}} (\overline{\Omega})^2$ unique in the set
\begin{align*}
\{(u,\nabla p) ~|~ u = \alpha U - \gamma W + v, \ \ v \in \mathcal{B}_{\rho}(\delta) \}. 
\end{align*}
The asymptotics \eqref{est2.thm.main} can be checked easily. The proof of Theorem \ref{thm.main} is complete. 
\end{proofx}
%

\appendix

\section{Integral estimates}\label{app.est.int.}
We summarize the integral estimates used in the proof of Proposition \ref{prop.est1.linear.nonzero}. The proof only uses elementary calculations and thus we will just state the results. Recall that 
\begin{align*}
n_\gamma = \Big\{n^2 + \Big(\frac{\gamma}{2}\Big)^2\Big\}^{\frac12}, 
\qquad 
\xi_n 
= 
\frac{n_\gamma}{\sqrt{2}} 
\bigg[\Big\{1 + \Big(\frac{\alpha n}{n_\gamma^2}\Big)^2\Big\}^{\frac12} + 1 \bigg]^{\frac12}. 
\end{align*}
%

%
\begin{lemma}\label{lem.app.est.int.1}
For $\alpha\in\R$, we have the following. 
\begin{enumerate}[(1)]
\item\label{item1.lem.app.est.int.1} Let $2<\rho\le\gamma$. For $|n|\ge1$, 
\begin{align*}
r^{-\xi_n-\frac{\gamma}{2}}
\int_{1}^{r} s^{\xi_n + \frac{\gamma}2 -2\rho + 1} \dd s
&
\le
\min\Big\{\frac{1}{\rho-2}, 
\Big(\xi_n-\frac{\gamma}2\Big)^{-1}\Big\}
r^{-\rho}, \\
r^{\xi_n-\frac{\gamma}{2}}
\int_{r}^{\infty} s^{-\xi_n + \frac{\gamma}2 -2\rho + 1} \dd s
&\le
\Big(\xi_n-\frac{\gamma}2\Big)^{-1} 
r^{-\rho}. 
\end{align*}

\item\label{item2.lem.app.est.int.1} Let $2<\rho\le3$. 
\begin{enumerate}[(i)]
\item If $2<\rho<3$, for $|n|\ge1$,
\begin{align*}
r^{-|n|-1}
\int_{1}^{r} s^{|n|-\rho+1} \dd s
&\le
\frac{1}{|n|-\rho+2} r^{-\rho+1}.
\end{align*}

\item If $\rho=3$, for $|n|=1$,
\begin{align*}
r^{-|n|-1}
\int_{1}^{r} s^{|n|-\rho+1} \dd s
&=
r^{-2} \log r,
\end{align*}
and for $|n|>1$,
\begin{align*}
r^{-|n|-1}
\int_{1}^{r} s^{|n|-\rho+1} \dd s
&\le
\frac{1}{|n|-1} r^{-2}.
\end{align*}

\item For $|n|\ge1$,
\begin{align*}
r^{|n|-1} \int_{r}^{\infty} s^{-|n|-\rho+1} \dd s
&\le \frac{1}{|n|+\rho-2} r^{-\rho+1}.
\end{align*}
\end{enumerate}
\end{enumerate}
\end{lemma}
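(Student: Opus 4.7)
The plan is elementary: each integral has a closed-form antiderivative of shape $s^{\nu}/\nu$ (or $\log s$ at the borderline $\nu=0$), so the bounds follow by algebraic bookkeeping once the signs of the relevant exponents are identified. The only nontrivial aspect is arranging the factorization of the integrand in part (1) so that the resulting constants are uniform in $n$, $\alpha$, $\gamma$, and $\rho$.

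For the first inequality of part (1) I would prove the two halves of the $\min$ by two distinct factorizations of $s^{\xi_n+\gamma/2-2\rho+1}$. To obtain the $(\rho-2)^{-1}$ factor, use that $\xi_n+\gamma/2>\gamma\ge\rho$ and write the integrand as $s^{\xi_n+\gamma/2-\rho}\cdot s^{-\rho+1}$, extract the first factor via $s\le r$, and integrate the second over $[1,r]$, which yields a constant bounded by $1/(\rho-2)$; the outer prefactor $r^{-\xi_n-\gamma/2}$ then collapses the extracted power of $r$ exactly to $r^{-\rho}$. To obtain the $(\xi_n-\gamma/2)^{-1}$ factor, instead factor as $s^{\xi_n-\gamma/2-1}\cdot s^{\gamma-2\rho+2}$ and split on the sign of $\gamma-2\rho+2$: if $\gamma\le 2\rho-2$, bound $s^{\gamma-2\rho+2}\le 1$ on $[1,r]$; if $\gamma>2\rho-2$, bound $s^{\gamma-2\rho+2}\le r^{\gamma-2\rho+2}$. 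In each subcase, integrating $s^{\xi_n-\gamma/2-1}$ produces the $1/(\xi_n-\gamma/2)$ constant, and the residual power of $r$ is either $r^{-\gamma}$ or $r^{-2\rho+2}$, both of which are dominated by $r^{-\rho}$ on $r\ge 1$ under $2<\rho\le\gamma$.

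The second inequality of (1) is immediate by antidifferentiation: $\xi_n>\gamma/2$ and $\rho>1$ make the exponent $-\xi_n+\gamma/2-2\rho+1$ strictly less than $-1$, so $\int_r^\infty s^{-\xi_n+\gamma/2-2\rho+1}\dd s=r^{-\xi_n+\gamma/2-2\rho+2}/(\xi_n-\gamma/2+2\rho-2)$; dropping the $2\rho-2$ in the denominator and using $r^{-2\rho+2}\le r^{-\rho}$ closes the estimate. Part (2) is pure explicit integration: in (i), the exponent $|n|-\rho+1$ strictly exceeds $-1$ under $|n|\ge1$ and $\rho<3$, so the antiderivative is the standard power function, producing the $(|n|-\rho+2)^{-1}$ factor; in (ii), the borderline $\rho=3$ with $|n|=1$ forces the integrand to be $s^{-1}$ and delivers the $\log r$, while $|n|>1$ leaves a strictly positive exponent and yields $(|n|-1)^{-1}$; in (iii), the exponent on $[r,\infty)$ is always below $-1$, so the convergent antiderivative gives $(|n|+\rho-2)^{-1}r^{-\rho+1}$ directly.

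The only thinking required is in the first inequality of (1): a single factorization cannot deliver both entries of the $\min$ uniformly across $\{|n|\ge1,\alpha\in\R,2<\rho\le\gamma\}$, since neither $\rho-2$ nor $\xi_n-\gamma/2$ dominates the other throughout this parameter range. The two-factorizations-plus-subcase device above resolves this issue; every other step in the lemma is routine power-function calculus.
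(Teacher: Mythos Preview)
Your argument is correct in every part; all the exponent signs and factorizations check out as you describe, and the two-factorization device in the first inequality of (1) cleanly delivers both entries of the $\min$. The paper itself omits the proof entirely, saying only that it ``uses elementary calculations,'' so there is no alternate approach to compare against---you have simply supplied what the author chose to leave out.
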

%

%
\begin{lemma}\label{lem.app.est.int.2}
Under the assumption in Proposition \ref{prop.est1.linear.nonzero}, 
\begin{align*}
r^{-\xi_n-\frac{\gamma}{2}}
\int_{1}^{r} s^{\xi_n + \frac{\gamma}2} |f_{n}|(s) \dd s
&
\le
\Big(\xi_n-\frac{\gamma}2\Big)^{-\frac12} 
\bigg(\int_{1}^{r} s^4 |f_n|^2(s) s\dd s\bigg)^{\frac12} r^{-2}, \\
r^{\xi_n-\frac{\gamma}{2}}
\int_{r}^{\infty} s^{-\xi_n + \frac{\gamma}2} 
|f_{n}|(s)
\dd s 
&\le
\Big(\xi_n-\frac{\gamma}2\Big)^{-\frac12} 
\bigg(\int_{r}^{\infty} s^4 |f_n|^2(s) s\dd s\bigg)^{\frac12} r^{-2}. 
\end{align*}
\end{lemma}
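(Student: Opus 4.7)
The plan is to obtain both inequalities via a single weighted Cauchy--Schwarz inequality, chosen so that the ``remainder'' factor lands exactly on $\bigl(\int s^4 |f_n|^2 s \dd s\bigr)^{1/2}$. For the first estimate I would write
\[
s^{\xi_n + \frac{\gamma}{2}} |f_n(s)|
= s^{\xi_n + \frac{\gamma}{2} - \frac{5}{2}} \cdot s^{\frac{5}{2}} |f_n(s)|
\]
and apply Cauchy--Schwarz on $(1,r)$ to deduce
\[
\int_{1}^{r} s^{\xi_n + \frac{\gamma}2} |f_n|(s) \dd s
\le
\bigg(\int_{1}^{r} s^{2\xi_n + \gamma - 5} \dd s\bigg)^{\frac12}
\bigg(\int_{1}^{r} s^5 |f_n|^2(s) \dd s\bigg)^{\frac12}.
\]
Since $\xi_n > \gamma/2 > 1$ by \eqref{ineqs.xi}, the exponent $2\xi_n + \gamma - 5$ is bigger than $-1$, and the first integral is bounded by $r^{2\xi_n + \gamma - 4}/(2\xi_n + \gamma - 4)$. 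The decomposition $2\xi_n + \gamma - 4 = 2(\xi_n - \gamma/2) + (2\gamma - 4)$ and the assumption $\gamma > 2$ give $2\xi_n + \gamma - 4 \ge 2(\xi_n - \gamma/2)$, so after taking the square root we extract the factor $(\xi_n - \gamma/2)^{-1/2}$. Finally, multiplying through by $r^{-\xi_n - \gamma/2}$ collapses the $r$-powers exactly to $r^{-2}$, matching the target.

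The second estimate is handled by the mirror argument on $(r,\infty)$. I would split
\[
s^{-\xi_n + \frac{\gamma}{2}} |f_n(s)|
= s^{-\xi_n + \frac{\gamma}{2} - \frac{5}{2}} \cdot s^{\frac{5}{2}} |f_n(s)|
\]
and apply Cauchy--Schwarz to produce the elementary integral $\int_r^\infty s^{-2\xi_n + \gamma - 5} \dd s$. Its convergence requires $-2\xi_n + \gamma - 5 < -1$, i.e.\ $\xi_n > (\gamma - 4)/2$, which is again implied by $\xi_n > \gamma/2$. The explicit value $r^{-2\xi_n + \gamma - 4}/(2\xi_n - \gamma + 4)$ together with $2\xi_n - \gamma + 4 = 2(\xi_n - \gamma/2) + 4 \ge 2(\xi_n - \gamma/2)$ again yields the factor $(\xi_n - \gamma/2)^{-1/2}$ after square rooting, and the prefactor $r^{\xi_n - \gamma/2}$ collapses the remaining $r$-powers to $r^{-2}$.

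I do not expect any genuine obstacle: the lemma is a weighted Cauchy--Schwarz together with an elementary power-integral computation. The only mild point is the choice of split so that the weight on $|f_n|^2$ lands precisely on $s^5 = s^4 \cdot s$ (matching the measure $s\dd s$ on the right-hand side) and so that the constants coming out of the two elementary integrals can both be absorbed into $(\xi_n - \gamma/2)^{-1/2}$; the assumption $\gamma > 2$ in the proposition is exactly what guarantees that $2\xi_n + \gamma - 4$ and $2\xi_n - \gamma + 4$ each dominate $2(\xi_n - \gamma/2)$.
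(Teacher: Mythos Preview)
Your argument is correct. The paper omits the proof entirely, stating only that ``the proof only uses elementary calculations and thus we will just state the results''; your weighted Cauchy--Schwarz split $s^{\pm\xi_n+\gamma/2}|f_n| = s^{\pm\xi_n+\gamma/2-5/2}\cdot s^{5/2}|f_n|$ is precisely the natural elementary computation that recovers the $L^2$-weighted right-hand side, and all the exponent bookkeeping you carry out checks. One very minor remark: for the second inequality the comparison $2\xi_n-\gamma+4\ge 2(\xi_n-\gamma/2)$ is automatic (it reduces to $4\ge0$) and does not actually use $\gamma>2$; only the first inequality genuinely needs $\gamma>2$ to ensure $2\xi_n+\gamma-4\ge 2(\xi_n-\gamma/2)$.
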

%

%
\subsection*{Acknowledgements}
The author is partially supported by JSPS KAKENHI Grant Number JP 20K14345. 
%

%
\bibliography{Ref}

\begin{thebibliography}{10}

\bibitem{ChangFinn1961}
I-Dee Chang and Robert Finn.
\newblock On the solutions of a class of equations occurring in continuum
  mechanics, with application to the {S}tokes paradox.
\newblock {\em Arch. Rational Mech. Anal.}, 7:388--401, 1961.

\bibitem{Fujita1961}
Hiroshi Fujita.
\newblock On the existence and regularity of the steady-state solutions of the
  {N}avier-{S}tokes theorem.
\newblock {\em J. Fac. Sci. Univ. Tokyo Sect. I}, 9:59--102 (1961), 1961.

\bibitem{Galdi04}
Giovanni~P. Galdi.
\newblock Stationary {N}avier-{S}tokes problem in a two-dimensional exterior
  domain.
\newblock In {\em Stationary partial differential equations. {V}ol. {I}},
  Handb. Differ. Equ., pages 71--155. North-Holland, Amsterdam, 2004.

\bibitem{Galdibook2011}
Giovanni~P. Galdi.
\newblock {\em An introduction to the mathematical theory of the
  {N}avier-{S}tokes equations}.
\newblock Springer Monographs in Mathematics. Springer, New York, second
  edition, 2011.
\newblock Steady-state problems.

\bibitem{GallagherHigakiMaekawa19}
Isabelle Gallagher, Mitsuo Higaki, and Yasunori Maekawa.
\newblock On stationary two-dimensional flows around a fast rotating disk.
\newblock {\em Math. Nachr.}, 292(2):273--308, 2019.

\bibitem{Guillod17}
Julien Guillod.
\newblock On the asymptotic stability of steady flows with nonzero flux in
  two-dimensional exterior domains.
\newblock {\em Comm. Math. Phys.}, 352(1):201--214, 2017.

\bibitem{GuillodWittwer2015}
Julien Guillod and Peter Wittwer.
\newblock Asymptotic behaviour of solutions to the stationary {N}avier-{S}tokes
  equations in two-dimensional exterior domains with zero velocity at infinity.
\newblock {\em Math. Models Methods Appl. Sci.}, 25(2):229--253, 2015.

\bibitem{Higaki19}
Mitsuo Higaki.
\newblock Note on the stability of planar stationary flows in an exterior
  domain without symmetry.
\newblock {\em Adv. Differential Equations}, 24(11-12):647--712, 2019.

\bibitem{HillairetWittwer13}
Matthieu Hillairet and Peter Wittwer.
\newblock On the existence of solutions to the planar exterior {N}avier
  {S}tokes system.
\newblock {\em J. Differential Equations}, 255(10):2996--3019, 2013.

\bibitem{KorobkovPileckasRusso2019}
Mikhail~V. Korobkov, Konstantin Pileckas, and Remigio Russo.
\newblock On convergence of arbitrary {$D$}-solution of steady
  {N}avier-{S}tokes system in 2{D} exterior domains.
\newblock {\em Arch. Ration. Mech. Anal.}, 233(1):385--407, 2019.

\bibitem{KorobkovPileckasRusso2020}
Mikhail~V. Korobkov, Konstantin Pileckas, and Remigio Russo.
\newblock On the steady {N}avier-{S}tokes equations in {$2D$} exterior domains.
\newblock {\em J. Differential Equations}, 269(3):1796--1828, 2020.

\bibitem{KorobkovPileckasRusso2021}
Mikhail~V. Korobkov, Konstantin Pileckas, and Remigio Russo.
\newblock Leray's plane steady state solutions are nontrivial.
\newblock {\em Adv. Math.}, 376:Paper No. 107451, 20, 2021.

\bibitem{KorobkovRen2021}
Mikhail~V. Korobkov and Xiao Ren.
\newblock Uniqueness of plane stationary {N}avier-{S}tokes flow past an
  obstacle.
\newblock {\em Arch. Ration. Mech. Anal.}, 240(3):1487--1519, 2021.

\bibitem{KozonoSohr1992}
Hideo Kozono and Hermann Sohr.
\newblock On a new class of generalized solutions for the {S}tokes equations in
  exterior domains.
\newblock {\em Ann. Scuola Norm. Sup. Pisa Cl. Sci. (4)}, 19(2):155--181, 1992.

\bibitem{Leray1933}
Jean Leray.
\newblock \'{E}tude de diverses \'{e}quations int\'{e}grales non lin\'{e}aires
  et de quelques probl\`emes que pose l'hydrodynamique.
\newblock {\em J. Math. Pures Appl.}, 12:1--82, 1933.

\bibitem{Maekawa17i}
Yasunori Maekawa.
\newblock On stability of steady circular flows in a two-dimensional exterior
  disk.
\newblock {\em Arch. Ration. Mech. Anal.}, 225(1):287--374, 2017.

\bibitem{Maekawa17ii}
Yasunori Maekawa.
\newblock Remark on stability of scale-critical stationary flows in a
  two-dimensional exterior disk.
\newblock In {\em Mathematics for nonlinear phenomena---analysis and
  computation}, volume 215 of {\em Springer Proc. Math. Stat.}, pages 105--130.
  Springer, Cham, 2017.

\bibitem{Nakatsuka15}
Tomoyuki Nakatsuka.
\newblock On uniqueness of symmetric {N}avier-{S}tokes flows around a body in
  the plane.
\newblock {\em Adv. Differential Equations}, 20(3-4):193--212, 2015.

\bibitem{PileckasRusso12}
Konstantin Pileckas and Remigio Russo.
\newblock On the existence of vanishing at infinity symmetric solutions to the
  plane stationary exterior {N}avier-{S}tokes problem.
\newblock {\em Math. Ann.}, 352(3):643--658, 2012.

\bibitem{Russo09}
Antonio Russo.
\newblock A note on the exterior two-dimensional steady-state {N}avier-{S}tokes
  problem.
\newblock {\em J. Math. Fluid Mech.}, 11(3):407--414, 2009.

\bibitem{Sohr2013}
Hermann Sohr.
\newblock {\em The {N}avier-{S}tokes equations: An elementary functional
  analytic approach}.
\newblock Modern Birkh\"{a}user Classics. Birkh\"{a}user/Springer Basel AG,
  Basel, 2001.

\bibitem{Yamazaki11}
Masao Yamazaki.
\newblock Unique existence of stationary solutions to the two-dimensional
  {N}avier-{S}tokes equations on exterior domains.
\newblock In {\em Mathematical analysis on the {N}avier-{S}tokes equations and
  related topics, past and future}, volume~35 of {\em GAKUTO Internat. Ser.
  Math. Sci. Appl.}, pages 220--241. Gakkotosho, Tokyo, 2011.

\bibitem{Yamazaki16}
Masao Yamazaki.
\newblock Two-dimensional stationary {N}avier-{S}tokes equations with 4-cyclic
  symmetry.
\newblock {\em Math. Nachr.}, 289(17-18):2281--2311, 2016.

\bibitem{Yamazaki18}
Masao Yamazaki.
\newblock Existence and uniqueness of weak solutions to the two-dimensional
  stationary {N}avier-{S}tokes exterior problem.
\newblock {\em J. Math. Fluid Mech.}, 20(4):2019--2051, 2018.

\end{thebibliography}
\bibliographystyle{plain}
%

\medskip

\begin{flushleft}
M. Higaki\\
Department of Mathematics, 
Graduate School of Science, 
Kobe University, 
1-1 Rokkodai, 
Nada-ku, 
Kobe 657-8501, 
Japan.

Email: higaki@math.kobe-u.ac.jp
\end{flushleft}

\medskip

\noindent \today

\end{document}